\def\YYint#1#2#3{{\setbox0=\hbox{$#1{#2#3}{\iint}$}
    \vcenter{\hbox{$#2#3$}}\kern-.50\wd0}}
\def\XXint#1#2#3{{\setbox0=\hbox{$#1{#2#3}{\int}$}
    \vcenter{\hbox{$#2#3$}}\kern-.50\wd0}}
\def\namedlabel#1#2{\begingroup
   \def\@currentlabel{#2}%
   \label{#1}\endgroup
}
\newcommand{\rmh}[1]{\mathpalette{\raisem@th{#1}}}
\newcommand{\raisem@th}[3]{\hspace*{-1pt}\raisebox{#1}{$#2#3$}}
\newtheorem{thm}{Theorem}[section]
\newtheorem{lem}[thm]{Lemma}
\theoremstyle{definition}
\theoremstyle{Remark}
\newtheorem{defn}[thm]{Definition}
\newtheorem{rem}{Remark}
\theoremstyle{example}
\numberwithin{equation}{section}
\newcommand{\R}{\mathbb{R}}
\newcommand{\ve}{\varepsilon}
\newcommand{\I}{\mathbb{I}}
\begin{document}
\title[]
 {Gradient continuity estimates  for the  normalized $p-$poisson equation}

 \author{Agnid Banerjee}
\address{Tata Institute of Fundamental Research\\
Centre For Applicable Mathematics \\ Bangalore-560065, India}\email[Agnid Banerjee]{agnidban@gmail.com}

\author{Isidro H. Munive}
\address{Instituto de Matem\'aticas, M\'exico}\email[Isidro Munive]{imunivel@gmail.com}

 \thanks{First author is supported in part by SERB Matrix grant MTR/2018/000267}
 \thanks{Second author is supported by CONACYT grant 265667, Instituto de Matem\'aticas, UNAM}

\subjclass[2010]{Primary 35J60, 35D40.}

\begin{abstract}
In this paper, we obtain   gradient continuity estimates for viscosity  solutions of  $\Delta_{p}^N u= f$ in terms of the  scaling critical $L(n,1 )$  norm of $f$,   where $\Delta_{p}^N$ is the normalized $p-$Laplacian operator defined in \eqref{pl} below.  Our main result, Theorem \ref{main},  corresponds to the borderline gradient  continuity estimate in terms of the modified Riesz potential $\tilde \I^{f}_{q}$. Moreover, for $f \in L^{m}$ with $m>n$, we also obtain $C^{1,\alpha}$ estimates,  see Theorem \ref{main1} below. This  improves  one of the   regularity results   in  \cite{APR},  where  a  $C^{1,\alpha}$  estimate was  established  depending on the  $L^{m}$ norm of $f$ under the additional restriction  that  $p>2$ and $m > \text{max} (2,n, \frac{p}{2}) $ (see Theorem 1.2 in \cite{APR}).  We also  mention that differently from the approach in \cite{APR}, which uses methods from divergence form theory and nonlinear potential theory in the proof of Theorem 1.2, our method  is more non-variational in nature, and it is   based on separation of phases inspired by the  ideas in \cite{W}.  Moreover, for $f$ continuous, our approach also  gives  a somewhat different  proof of the $C^{1, \alpha}$ regularity result, Theorem 1.1,  in \cite{APR}. 
\end{abstract}

\maketitle

\tableofcontents 

\section{Introduction}
The aim of this paper is to   obtain pointwise gradient continuity estimates   for viscosity solutions of
\begin{equation}
\label{m}
\Delta_{p}^N u= f
\end{equation}
in terms of the scaling critical $L(n,1)-$norm of $f$. Here, $\Delta_{p}^N $ denotes the normalized $p-$Laplace operator given by
\begin{equation}
\label{pl}
\Delta_{p}^N u\doteq \bigg(\delta_{ij}+ (p-2) \frac{u_i u_j}{ |\nabla u|^2} \bigg) u_{ij}.
\end{equation}

The fundamental role of these borderline, or end-point regularity, estimates in the theory of elliptic and parabolic partial differential equations is well known.  In order to put our result in the correct historical perspective, we note that in 1981,  E. Stein in his visionary work \cite{MR607898} showed the following.
\begin{thm}\label{stein}
Let $L(n,1)$ denote the standard Lorentz space, then the following implication holds:
\[\nabla v \in L(n,1) \ \implies \ v\  \text{\emph{is continuous}}.\]  
\end{thm}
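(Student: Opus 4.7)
My plan is to deduce the continuity of $v$ from the representation of $v$ in terms of its gradient via the first-order Riesz potential, and then exploit the sharp estimate for Riesz potentials acting on Lorentz spaces. After reducing (by mollification and localization) to the case where $v$ is smooth with compact support, one has the classical identity
\begin{equation*}
v(x)-v(y)=c_{n}\int_{\R^{n}}\left(\frac{x-z}{|x-z|^{n}}-\frac{y-z}{|y-z|^{n}}\right)\cdot \nabla v(z)\,dz.
\end{equation*}
Set $r=|x-y|$. I split the integration into the near zone $N=\{z:|z-x|<2r\}$ and the far zone $F=\R^{n}\setminus N$, and treat the two contributions separately.

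For the near-zone piece, the crude bound $|K(x,y,z)|\lesssim |x-z|^{-(n-1)}+|y-z|^{-(n-1)}$ gives a contribution controlled by
\begin{equation*}
\int_{|z-x|<3r}\frac{|\nabla v(z)|}{|x-z|^{n-1}}\,dz.
\end{equation*}
The key point is that the kernel $|x-\cdot|^{-(n-1)}$ lies in the weak Lorentz space $L^{n/(n-1),\infty}$ with a norm that is \emph{independent} of the radius of the ball on which one computes it (by scale invariance), so Hölder's inequality for Lorentz spaces yields the estimate $\lesssim \|\nabla v\,\chi_{B_{3r}(x)}\|_{L(n,1)}$. Since $\nabla v\in L(n,1)$ and the $L(n,1)$-norm is absolutely continuous with respect to the measure of the support, this quantity tends to $0$ as $r\to 0$, uniformly in $x$ on compact sets.

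For the far-zone piece, a first-order Taylor expansion of the kernel gives $|K(x,y,z)|\lesssim r/|x-z|^{n}$ whenever $|z-x|\geq 2r$. The resulting contribution is bounded by
\begin{equation*}
r\int_{|z-x|\geq 2r}\frac{|\nabla v(z)|}{|x-z|^{n}}\,dz,
\end{equation*}
which, via a standard dyadic decomposition and Hölder's inequality in Lorentz spaces, is dominated by
\begin{equation*}
r \sum_{k\geq 1}(2^{k}r)^{-1}\|\nabla v\,\chi_{B_{2^{k+1}r}(x)}\|_{L(n,1)}\lesssim \int_{0}^{\infty}\rho^{-1}\,\omega_{\nabla v}(\rho)\,d\rho\text{-type tail},
\end{equation*}
where the tail of this Dini-like expression tends to $0$ precisely because $\nabla v\in L(n,1)$ (the defining condition $\int_{0}^{\infty}|\{|\nabla v|>t\}|^{1/n}\,dt<\infty$ is exactly what makes the dyadic Riesz sum convergent).

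The main obstacle is the quantitative Lorentz-space bookkeeping in these two steps: one must verify that Hölder's inequality $\|fg\|_{L^{1}}\lesssim \|f\|_{L(n,1)}\|g\|_{L^{n/(n-1),\infty}}$ is applied with absolutely continuous $L(n,1)$-norm of the restriction, and that the dyadic sum in the far-zone estimate telescopes against the $L(n,1)$-quasinorm rather than against the strictly weaker $L^{n}$-quasinorm (which is what forces the choice of exactly $L(n,1)$ and not $L^{n}$). Combining the two bounds shows that $|v(x)-v(y)|\to 0$ as $|x-y|\to 0$, uniformly on compact sets, which is the desired continuity.
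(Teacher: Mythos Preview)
The paper does not prove this statement. Theorem~\ref{stein} is quoted in the introduction as a classical result of E.~Stein \cite{MR607898} to provide historical context and motivation; no argument is given, nor is one needed for the paper's own results. So there is no ``paper's proof'' to compare your proposal against.

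That said, your sketch is essentially the standard potential-theoretic proof and is largely sound. The near-zone estimate is clean: you correctly pair the kernel $|x-\cdot|^{1-n}\in L^{n/(n-1),\infty}$ against $\nabla v\in L(n,1)$ and invoke absolute continuity of the $L(n,1)$-quasinorm (valid because the second Lorentz index is $1<\infty$). The far-zone sketch is a bit loose as written: the phrase ``$\int_{0}^{\infty}\rho^{-1}\omega_{\nabla v}(\rho)\,d\rho$-type tail'' obscures what is actually a clean dominated-convergence argument. After the dyadic split the far-zone contribution is bounded by $\sum_{k\geq 1}2^{-k}\|\nabla v\,\chi_{B_{2^{k+1}r}(x)}\|_{L(n,1)}$; each term tends to $0$ as $r\to 0$ by absolute continuity, and the sum is dominated by the summable majorant $2^{-k}\|\nabla v\|_{L(n,1)}$. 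I would write that step out explicitly rather than gesturing at a Dini tail.

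One further point worth tightening: the reduction ``by mollification and localization'' to smooth, compactly supported $v$ deserves a sentence. You need to specify the a priori class of $v$ (e.g.\ $v\in W^{1,1}_{\mathrm{loc}}$ with $\nabla v\in L(n,1)$), mollify, obtain a uniform modulus of continuity on compacta from your estimate (which depends only on $\|\nabla v\|_{L(n,1)}$ and the $L(n,1)$-modulus of the gradient on small balls), and pass to the limit. Without this, the representation formula you start from is not justified.
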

The Lorentz space $L(n,1)$ appearing in Theorem \ref{stein}  consists of those measurable functions $g$ satisfying the condition
\[
\int_{0}^{\infty} |\{x: g(x) > t\}|^{1/n} dt < \infty.
\]
Theorem \ref{stein} can be regarded as the limiting case of Sobolev-Morrey embedding that asserts
\[
\nabla v \in L^{n+\ve} \implies v \in C^{0, \frac{\ve}{n+\ve}}.
\]
Note that indeed $L^{n+\ve} \subset L(n, 1) \subset L^{n}$ for any $\ve>0$, with all the inclusions being strict.  Now Theorem \ref{stein} coupled with the standard Calderon-Zygmund theory  has the following interesting consequence.
\begin{thm}\label{st}
$\Delta u \in L(n,1) \implies \nabla u$ is continuous.
\end{thm}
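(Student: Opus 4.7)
The plan is to reduce Theorem \ref{st} to Theorem \ref{stein} by passing from the Laplacian to the full Hessian via Calder\'on--Zygmund theory on Lorentz spaces. Concretely, I would argue that if $\Delta u = f$ with $f \in L(n,1)$, then each partial derivative $\partial_i u$ has its full gradient in $L(n,1)$, at which point Stein's theorem applied component-wise yields continuity of $\nabla u$.

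First, since the statement is local in nature, I would localize by multiplying $u$ by a smooth cutoff $\eta$ and writing $\Delta(\eta u) = \eta f + 2 \nabla \eta \cdot \nabla u + u \Delta \eta$. The lower-order terms on the right are smooth (or at worst in $L^m$ for every $m$ by standard Calder\'on--Zygmund bootstrap starting from $f \in L^n$), so they are harmless and the problem reduces to the case of a function $v$ on $\R^n$ with compact support whose Laplacian lies in $L(n,1)$. Then $v$ can be recovered via convolution with the Newtonian potential, $v = \Gamma * (\Delta v)$, so the second derivatives $\partial_i \partial_j v$ are given by singular integral operators (iterated Riesz transforms) applied to $\Delta v$.

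Next, I invoke the Calder\'on--Zygmund theorem in Lorentz spaces: any Calder\'on--Zygmund singular integral operator is bounded on $L(p,q)$ for every $1<p<\infty$ and $1\le q \le \infty$. This follows from the classical weak-type $(1,1)$ and strong $(p,p)$ estimates via Marcinkiewicz/real interpolation in the Lorentz scale. Applying this with $(p,q) = (n,1)$ to the operators sending $\Delta v$ to $\partial_i\partial_j v$, I obtain $D^2 v \in L(n,1)$, and consequently $D^2 u \in L(n,1)$ locally. In particular, for every $i$, $\nabla(\partial_i u) \in L(n,1)$.

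Finally, for each fixed $i$, Theorem \ref{stein} applied to $\partial_i u$ gives that $\partial_i u$ is continuous. Doing this for all $i$ concludes that $\nabla u$ is continuous, as desired. The main conceptual step — and the one to be careful about — is the boundedness of Calder\'on--Zygmund operators on the borderline Lorentz space $L(n,1)$, which is strictly smaller than $L^n$ and does not follow from the $L^n$ boundedness alone; it requires a genuine interpolation argument on the full Lorentz scale.
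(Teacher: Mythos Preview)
Your proposal is correct and matches the paper's own justification, which is simply the one-line remark that Theorem~\ref{st} follows from Theorem~\ref{stein} ``coupled with the standard Calderon-Zygmund theory''; you have just spelled out that coupling in detail (localize, use boundedness of Calder\'on--Zygmund operators on $L(n,1)$ to get $D^2 u \in L(n,1)$, then apply Stein componentwise).
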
 
The analogue of Theorem \ref{st}  for general nonlinear, and possibly degenerate elliptic and parabolic equations, has become accessible not so long ago through a rather sophisticated and powerful nonlinear potential theory (see for instance \cite{MR2823872,MR2900466,MR3174278} and the references therein).  The first breakthrough in this direction came up in the work  of Kuusi and Mingione in \cite{MR3004772}, where they showed that the analogue of Theorem \ref{st}  holds for operators modelled after the $p$-Laplacian. Such a result  was subsequently generalized to $p$-Laplacian-type systems by the same authors in  \cite{MR3247381}. 

Since then, there has been several generalizations of Theorem \ref{st} to operators with various kinds of nonlinearities. In the context of  fully nonlinear elliptic equations, the  analogue of Theorem \ref{st} was established by Daskalopoulos-Kuusi-Mingione in  
\cite{DKM}. More precisely, they showed the following (see Theorem 1.1 in \cite{DKM}).
\begin{thm}\label{dkm}
Let $u$ be a  $W^{2,q}$ viscosity solution of
\begin{equation}\label{fl}
F(x,\nabla^2u)= f\   \text{in}\ B_1,
\end{equation}
where $F$ is uniformly elliptic fully nonlinear operator and $f \in L(n, 1)$. Then, there exists $\theta \in (0, 1)$, depending only on $n$ and the ellipticity constants of $F$,  such that if $F(.)$ has $\theta$-BMO coefficients, then $\nabla u$ is continuous in the interior of $B_1$. Moreover, the following estimates hold for some $\alpha=\alpha(n, \lambda, \Lambda)$ and $\delta \in (0,1)$, 
\begin{equation}\label{dkm1}
\begin{cases}
|\nabla u(x_0)| \leq  C\left( \tilde \I^{f}_{q} (x_0,r) + \left(\frac{1}{|B_{r} (x_0)|}\int_{B_{r}(x_0)} |\nabla u|^{p}\right)^{1/p}\right)\ \text{\emph{for any} $p>n$},
\\
|\nabla  u(x_1) - \nabla u(x_2) | \leq C \left( ||\nabla u||_{L^{\infty}(B_{3r})} |x_1- x_2|^{\alpha(1-\delta)} +  \sup_{x \in \{x_1, x_2\}}  \tilde \I^{f}_{q} (x, 4 |x_1-x_2|^{\delta})\right),
\end{cases}
\end{equation}
whenever $x_1, x_2 \in B_{r}$. Here  $\tilde \I^{f}_{q} (x_0,r )$ is the  \say{modified Riesz potential} defined by
\begin{equation}\label{riesz}
\tilde \I^{f}_{q} (x_0,r )= \int_{0}^{r} \left( \def\avint{\mathop{\,\rlap{-}\!\!\int}\nolimits} \avint_{B_s(x_0)} |f|^q \right)^{1/q} ds,
\end{equation} 
and $C=C(n,p,r,\lambda,\Lambda)$.
\end{thm}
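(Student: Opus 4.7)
The plan is to combine a Caffarelli-type approximation scheme with a dyadic iteration at a fixed base point $x_0$. At each scale $r_k$ the solution $u$ is compared with an affine function $\ell_k$, and the error from one scale to the next is controlled by a scaled $L^q$ norm of $f$ on $B_{r_k}$ plus a BMO oscillation term for the coefficients of $F$. The borderline integrability $f\in L(n,1)$ is then precisely what makes the sum of scaled errors converge, yielding continuity of $\nabla u$ together with the Riesz potential bound \eqref{dkm1}.

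First I would establish the approximation step: freeze the coefficients of $F$ at $x_0$ and compare $u$ with the viscosity solution $h$ of $F(x_0,\nabla^2 h)=0$ in $B_r(x_0)$ with $h=u$ on $\partial B_r$. A quantitative ABP/maximum-principle argument bounds $\|u-h\|_{L^\infty(B_r)}$ by $r\bigl(|B_r|^{-1}\int_{B_r}|f|^q\bigr)^{1/q}$ plus the BMO oscillation of $F(\cdot,M)-F(x_0,M)$ on $B_r$, which is where the smallness assumption on $\theta$ enters. Caffarelli's interior $C^{1,\alpha_0}$ theory for the homogeneous constant-coefficient equation then produces an affine function $\ell(x)=a+b\cdot(x-x_0)$ with $\|h-\ell\|_{L^\infty(B_{\tau r})}\le C\tau^{1+\alpha_0}r\|\nabla h\|_{L^\infty(B_r)}$ for $\tau\in(0,1)$ small; combining the two estimates yields a one-step excess decay for $u$ at scale $\tau r$.

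Next I would iterate at the scales $r_k=\tau^k r$, choosing $\tau$ so small that $C\tau^{1+\alpha_0}\le\tau/2$, to obtain a sequence of affine functions $\ell_k(x)=a_k+b_k\cdot(x-x_0)$ with a Campanato-type excess decay. A telescoping estimate controls $|b_{k+1}-b_k|$ by the Riemann sum $\sum_k\bigl(|B_{r_k}|^{-1}\int_{B_{r_k}}|f|^q\bigr)^{1/q}r_k$, which is comparable to the modified Riesz potential $\tilde\I^{f}_q(x_0,r)$. Sending $k\to\infty$ gives both the pointwise bound on $|\nabla u(x_0)|$ and the Cauchy property of $\{b_k\}$, identifying the limit with the actual gradient. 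The pairwise H\"older-type estimate between $\nabla u(x_1)$ and $\nabla u(x_2)$ then follows by performing the iteration at both points down to an intermediate scale $\sim|x_1-x_2|^\delta$ and combining the excess decay above that scale with the Riesz term below it.

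The main obstacle is the BMO coefficient step: the threshold $\theta$ must be calibrated in terms of $\tau$, $n$, $\lambda$, $\Lambda$ so that the coefficient oscillation does not destroy the contraction at each stage, and the approximation argument has to be carried out purely in the viscosity sense, with $h$ built by Perron's method and the stability of viscosity solutions under frozen-coefficient perturbations used in quantitative form. A secondary subtlety, needed to upgrade continuity of $\nabla u$ to the explicit modulus in \eqref{dkm1}, is that $f\in L(n,1)$ forces $\tilde\I^{f}_q(x,\rho)\to 0$ as $\rho\to 0$ uniformly in $x$ (for $q$ slightly below $n$), which is precisely the Lorentz-space ingredient mirroring Stein's Theorem \ref{stein}.
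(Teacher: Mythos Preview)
This theorem is not proved in the paper you are working from: it is quoted in the introduction as Theorem~1.1 of Daskalopoulos--Kuusi--Mingione \cite{DKM}, purely to motivate the analogous result (Theorem~\ref{main}) for the normalized $p$-Laplacian. Consequently there is no ``paper's own proof'' of Theorem~\ref{dkm} to compare your sketch against.

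That said, your outline is a faithful summary of the Caffarelli-style scheme that \cite{DKM} actually carries out: freeze coefficients, compare with a solution of the constant-coefficient homogeneous equation (which enjoys $C^{1,\alpha_0}$ estimates), extract a one-step affine approximation, iterate dyadically, and observe that the accumulated errors are controlled by the modified Riesz potential $\tilde\I^{f}_q$, which is finite and vanishes as the radius shrinks precisely when $f\in L(n,1)$. The paper at hand adapts the same philosophy to $\Delta_p^N$, but with the extra complication that the operator depends singularly on $\nabla u$; this forces the authors to split into a degenerate phase (small slope, compare with a $p$-harmonic function via Lemma~\ref{rt}) and a non-degenerate phase (large slope, compare with a linear constant-coefficient equation via Lemma~\ref{app1}), in the spirit of Wang \cite{W}. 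Your sketch, being for a uniformly elliptic $F$ with no gradient dependence, correctly omits this phase separation.
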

Before proceeding further, we make the following important remark. 

\begin{rem}\label{mod}
The reader should note that  from the Hardy-Littlewood rearrangement inequality (see for instance \cite{DKM}) we have that 
\begin{align}\label{int1}
&  \int_{0}^{r} \left( \def\avint{\mathop{\,\rlap{-}\!\!\int}\nolimits} \avint_{B_s} |f|^q\right)^{1/q} ds \leq   \frac{C}{|B_1|^{\frac1n}} \int_{0}^{|B_r|} \left[ f^{**}(\rho) \rho^{\frac{q}{n}} \right]^{\frac{1}{q}} \ \frac{d\rho}{\rho},
\end{align}
where $f^{**}$ is defined as 
\[
f^{**}(\rho)= \frac{1}{\rho} \int_{0}^{\rho}  f^{*} (t) dt,
\]
with $f^{*}$ being the radial non-increasing rearrangement of $f$.  Now, when $f \in L(n,1)$, we have from an equivalent characterization of Lorentz spaces that
 \begin{equation}\label{t}
   \int_{0}^{\infty} \left[ f^{**}(\rho) \rho^{\frac{q}{n}} \right]^{\frac{1}{q}} \ \frac{d\rho}{\rho} < \infty, \quad \text{for $q < n$}.
   \end{equation}
Therefore, it follows from the inequalities in \eqref{int1} and \eqref{t}   that when $f \in L(n,1)$ and $q<n$,   $\tilde \I^{f}_{q} (x_0,r ) \to 0$ as $r \to 0$.  Consequently, the gradient continuity follows from the estimates in \eqref{dkm1} above.  
\end{rem}
We also refer to the recent work \cite{AB} of one of us and Adimurthi where  an analogous regularity  result has been obtained  under Dirichlet boundary conditions when the domain is  $C^{1, Dini}$. The result was established  using Caffarelli style compactness arguments as in \cite{Ca}. 

\medskip 

In this paper we establish a similar estimate  as in \eqref{dkm1} above  when the fully nonlinear operator $F$ gets replaced by the  normalized $p-$Laplacian operator $\Delta_{p}^N$.  In order to provide the reader  with the right viewpoint concerning our approach, we   note that getting $C^{1}-$regularity result  in general   amounts to show that the graph of $u$ can be touched by an affine function so that the error is of order $o(r)$ in a ball of radius $r$ for every $r$ small enough. The proof of this is based on iterative argument where one ensures improvement of flatness at  every successive scale by comparing to a solution of a  limiting equation with  more regularity.  At each step, via rescaling, it reduces to show that if $<p_0,x> +u$ solves \eqref{m} in $B_1$, then the oscillation of $u$ is strictly smaller in a smaller ball upto a linear function. This is accomplished via compactness arguments which crucially relies on apriori estimates. Such estimates in the context of $\Delta_{p}^N$ come from the Krylov-Safonov theory because the equation \eqref{m} lends itself to a uniformly elliptic structure.  

\medskip

Now, for a $u$ that solves \eqref{m}, we have that $u-<p_0,x>$ is a solution of the following perturbed equation
\begin{equation}\label{it}
\bigg(\delta_{ij} + (p-2) \frac{(u_i +(p_0)_i)(u_j+(p_0)_j)}{|\nabla u + p_0|^2} \bigg) u_{ij}=f.
\end{equation}
Therefore, in order to obtain improvement of flatness at each scale after a rescaling, it is imperative to get uniform $C^{1}-$type estimates independent of $|p_0|$  for  the limiting equations corresponding to the case  $f \equiv 0$.  This is precisely done in \cite{APR}  by an adaptation of  the Ishii-Lions approach as in \cite{IL},  where the authors   obtained uniform Lipschitz estimates  for solutions to \eqref{it} for large $|p_0|'s$ when $f=0$.  In this paper, we follow an approach which is different from that in \cite{APR}.  Our proofs of Theorem \ref{main} and Theorem \ref{main1}  are based  instead on separation of  the degenerate and the non-degenerate phase,  and  do not rely on the uniform Lipschitz estimates for equations of the type \eqref{it} for large $|p_0|'s$.  This is inspired by ideas in \cite{W}, where an alternate proof of $C^{1, \alpha}-$regularity for the $p-$Laplacian was given.  Moreover, in the case of continuous $f$,  our method  also provides a different  proof of the $C^{1, \alpha}-$regularity result for \eqref{m}  established  in \cite{APR} (see also \cite{BD} for $p\geq 2$). We believe that this alternate viewpoint    would definitely   be of independent interest. 

\medskip

Finally, we mention that over the last decade, there has been a growing  attention on equations of the type \eqref{m} because of  their connections to tug-of-war games with noise. This aspect was first studied in \cite{PS} .  In recent times,   the parabolic  normalized $p-$Laplacian,  as well as its degenerate and singular variants, have been studied in  various contexts in  a number of papers, see \cite{A, JK, D, BG1, BG2, BG3, HL,  PR, IJS, Ju, JS, MPR}.  Such equations have also found applications in image processing (see for instance \cite{D}). 

\medskip

The paper is organized as follows. In Section \ref{n} we introduce some basic notations, list some preliminary results, and then state our main theorems.  In Section \ref{mn} we first establish   approximation lemmas that play a crucial role in the separation of phases in the  iterative argument in the proof  of our main results. We then subsequently establish our  main results Theorem \ref{main} and Theorem \ref{main1}.  In closing, we would like to mention that it  remains to be seen whether one can obtain similar borderline estimates for more general  equations of the type
\[
|\nabla u|^\gamma \bigg(\delta_{ij} + (p-2) \frac{u_i u_j}{|\nabla u|^2} \bigg) u_{ij} =f,
\]
with appropriate restrictions on the parameter $\gamma$. This seems to be an interesting open question to which we would like to come back in a future study.

\section{Notations, Preliminaries and  statement of the main results}\label{n}

We  denote  points in $\R^n$  by $x,y,x_1,x_2$ etc. We let $|x|$ be the norm of $x$, and $|A|$ will denote the Lebesgue measure of $A\subset \R^n$. Let $B_r(x)=\{x:|x|<r\}$. When $x=0$, we will ocassionally denote such a set by $B_r$. By $\partial B_r(x)$, we will denote the boundary of the set  $B_{r}(x)$.   We will also denote by $S(n)$ the space of $n \times n$ symmetric matrices.    In our ensuing discussion, at times we will be using the notation  $\def\avint{\mathop{\,\rlap{-}\!\!\int}\nolimits} \avint_{A} h  dx$  to indicate  the integral average of a function $h$ over a set $A$.

\medskip

We now  fix an exponent $q \in (n-n_0,n)$, where $n_0$ (denoted by $\ve$ in \cite{E}) is a small universal constant as obtained in \cite{E}, such that the Krylov-Safanov type H\"older estimate  holds  for  functions  which belong to extremal Pucci class   $\mathcal{S}(\lambda, \Lambda, f)$  in the $W^{2,q}$ viscosity sense. Here  
\begin{equation}\label{l}
\lambda=\text{min}(1, p-1),\  \Lambda= \text{max}(1, p-1),
\end{equation}
$f \in L^{q}$, and   $\mathcal{S}(\lambda, \Lambda, f)$ is the set of all functions $u$ which solves in the $W^{2,q}$ viscosity sense (we refer to \cite{CCCS} for the precise notion of $W^{2.q}$ viscosity solutions)
 \begin{equation}
 \mathcal{P}_{\lambda, \Lambda}^{-} (\nabla^2 u) \leq f \leq  \mathcal{P}_{\lambda, \Lambda}^{+}  (\nabla^2 u).  
 \end{equation}
The operators $\mathcal{P}_{\lambda, \Lambda}^{-}$ and $ \mathcal{P}_{\lambda, \Lambda}^{+}$ are the minimal and maximal Pucci operators, respectively, defined in the following way
 \begin{equation}\label{max}
 \begin{cases}
 \mathcal{P}_{\lambda, \Lambda}^{-}( M)= \text{inf}_{\{A \in S(n): \lambda \mathbb{I} \leq A \leq \Lambda \mathbb{I}\}} \text{trace}\ (AM), 
 \\
 \mathcal{P}_{\lambda, \Lambda}^{+}( M)= \text{sup}_{\{A \in S(n): \lambda \mathbb{I} \leq A \leq \Lambda \mathbb{I}\}} \text{trace}\ (AM).
 \end{cases}
 \end{equation}

We now turn our attention to the relevant notion of solution to \eqref{m}. For $p \in \R^n -\{0\}$ and $X=[m_{ij}] \in  S(n)$, following \cite{BK},  we define 
\[
F(p, X)= \bigg(\delta_{ij} + (p-2) \frac{p_i p_j}{|p|^2} \bigg)  m_{ij}.
\]
Then as in \cite{CIL},  the lower semicontinuous relaxation $F_*$ is defined as follows

\begin{equation}
F_*(q,X)=\begin{cases}  \qquad F(q,X)\quad&\hbox{if }q\not=0,\\
\inf_{a\in{\mathbb R}^n\setminus\{ 0\}}F(a,X)\quad&\hbox{if }q=0,      
\end{cases}\end{equation}
while the upper semicontinuous relaxation $F^{*}$ is defined as 

\begin{equation}\label{up}
F^{*}(q,X)=\begin{cases} \qquad F(q,X)\quad&\hbox{if }q\not=0,\\
\sup_{a\in{\mathbb R}^n\setminus\{ 0\}}F(a,X)\quad&\hbox{if }q=0.     
\end{cases}
\end{equation}
\begin{defn}
We say that $u$ is a $W^{2,q}$ viscosity sub-solution of \eqref{m} in a domain $\Omega \subset \R^n$ if given  $\phi \in W^{2,q}$ such that $u-\phi$ has a local maximum at  $x_0 \in \Omega$, then one has
\begin{equation}\label{test}
\limsup_{x \to x_0}  F^{*} ( \nabla \phi(x), \nabla^2 \phi(x)) - f(x) \geq 0.
\end{equation}
\end{defn}
In an analogous way, the notion of viscosity supersolution of \eqref{m} is defined  using $F_*$ instead of $F^{*}$, and where $\limsup$ gets replaced by $\liminf$ in the equation \eqref{test} above.  Finally, we  say that $u$ is a $W^{2,q}$ viscosity solution to \eqref{m} if it is both a subsolution and a supersolution.  It is easy to  deduce that if $u$ is a $W^{2,q}$ viscosity solution to \eqref{m}, then $u$ belongs to the Pucci class $\mathcal{S}(\lambda, \Lambda, f)$ in the $W^{2,q}$ viscosity sense where $\lambda, \Lambda$ are as in \eqref{l}. Hence, $u$ satisfies  universal H\"older estimates as in \cite{E}, which  depend on $n, p$ and $||u||_{L^{\infty}}$.

\subsection{Statement of the main results}
We now state our first  main result. This result corresponds to the  regularity  estimate  in the borderline  case, i.e.,  gradient continuity estimates with dependence on the  $L(n,1)$ norm of $f$. 

\begin{thm}\label{main}
For a given $p>1$, let $u$  be a $W^{2,q}$ viscosity solution of \eqref{m} in $B_1$  where $f \in L(n,1)$. Then $\nabla u$ is continuous inside of $B_1$. Moreover, the following  borderline estimates hold
\begin{equation}\label{bm}
\begin{cases}
&|\nabla u(x_0)| \leq  C( \tilde \I^{f}_{q} (x_0,1/2 ) + ||u||_{L^{\infty}(B_1)})\ \text{ \emph{for} $x_0 \in B_{1/2}$},
\\
&|\nabla  u(x_1) - \nabla u(x_2) |\\ 
& \leq C(n, p) \bigg( \bigg[|| u||_{L^{\infty}(B_{3/4})}  + \sup_{x \in \{x_1, x_2\}}  \tilde \I^{f}_{q} (x, 1) \bigg] |x_1-x_2|^{\alpha/4} +  \sup_{x \in \{x_1, x_2\}}  \tilde \I^{f}_{q} (x, 4 |x_1-x_2|^{1/4}) \bigg),
\end{cases}
\end{equation}
  whenever  $x_1, x_2 \in B_{1/2}$,  and  where  $\alpha=\alpha(n, p)$.  \end{thm}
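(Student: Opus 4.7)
The plan is to prove \eqref{bm} by an iterative affine-approximation scheme carried out at each base point $x_0 \in B_{1/2}$, analogous to that in Theorem \ref{dkm}, but with the homogeneous comparison step supplied by the separation-of-phases technique of \cite{W} rather than by the uniform Lipschitz estimates of \cite{APR}. Fixing $x_0$, I would construct scale by scale affine functions $L_k(x) = a_k + \langle p_k, x - x_0\rangle$ at dyadic radii $r_k = \mu^k$ for some $\mu = \mu(n,p) \in (0,1/2)$, together with a flatness estimate of the form
\[
\sup_{B_{r_k}(x_0)} |u - L_k| \le \eta^k r_k \bigl( \tilde \I^{f}_{q}(x_0, 1) + ||u||_{L^\infty(B_1)} \bigr) + r_k \sum_{j=0}^{k-1} \eta^{k-1-j}\, \Phi_j,
\]
where $\eta \in (\mu,1)$ and $\Phi_j = r_j \bigl(\tfrac{1}{|B_{r_j}(x_0)|}\int_{B_{r_j}(x_0)} |f|^q \bigr)^{1/q}$ is the natural dimensional quantity associated to $f$ at scale $r_j$. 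Such an estimate forces $\{p_k\}$ to be Cauchy with limit $\nabla u(x_0)$, and telescoping together with \eqref{int1}--\eqref{t} yields both the pointwise bound on $|\nabla u(x_0)|$ and the continuity of $\nabla u$.

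The key ingredient is an approximation lemma that, at each step, selects $L_{k+1}$ from $L_k$. After subtracting $L_k$ and rescaling $B_{r_k}(x_0) \to B_1$, the function $v(x) = r_k^{-1}\bigl(u(x_0 + r_k x) - L_k(x_0 + r_k x)\bigr)$ solves a perturbed equation of type \eqref{it} with slope $p_0 = p_k$ and right-hand side of small $L^q$-average controlled by $\Phi_k$. The separation-of-phases idea then splits the analysis in two. In the \emph{non-degenerate} phase, where $|p_k|$ is large compared to the current flatness $\sup_{B_1}|v|$, the rescaled operator is uniformly elliptic with coefficients depending smoothly on $p_k$, and a compactness argument together with the Krylov--Safonov H\"older bounds available in $\mathcal{S}(\lambda, \Lambda, f)$ from \cite{E} yields the improved affine approximation. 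In the \emph{degenerate} phase, $|p_k|$ is small and the equation is close to the homogeneous equation $\Delta_p^N w = 0$, whose solutions satisfy interior $C^{1,\alpha}$ bounds (the $f \equiv 0$ case of Theorem \ref{main1}, for which the present method also supplies a new proof); the flatness improvement then holds again, possibly with a different affine correction.

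With the approximation lemma in hand, the iteration closes in the standard manner: $|p_{k+1}-p_k| \lesssim \eta^k(\text{data}) + \Phi_k$ is read off from comparing $L_{k+1}$ and $L_k$ on $B_{r_{k+1}}(x_0)$, and summation yields both the Cauchy property of $\{p_k\}$ and the desired bound on $\nabla u(x_0)$. For the oscillation estimate in \eqref{bm}, I would run the iteration simultaneously at $x_1$ and $x_2$, producing sequences $p_k^{(1)}, p_k^{(2)}$: at scales $r_k \ge |x_1 - x_2|^{1/4}$ the two cascades are compared using the interior H\"older estimates of \cite{E} at ellipticity constants \eqref{l}, giving the factor $|x_1 - x_2|^{\alpha/4}$; at smaller scales the deviation of $p_k^{(i)}$ from $\nabla u(x_i)$ is controlled by the tail of the corresponding telescoping series, which is in turn majorized by $\sup_{x \in \{x_1, x_2\}} \tilde \I^{f}_{q}(x, 4|x_1 - x_2|^{1/4})$.

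The main difficulty lies in the approximation lemma across the transition between the two phases, specifically in arranging that the flatness-improvement constants remain uniform as $|p_k| \to 0^+$, where the rescaled operator degenerates and the comparison must be handed off from a uniformly elliptic perturbation regime to the homogeneous normalized $p$-Laplacian regime. This uniformity is precisely the pay-off of the separation-of-phases viewpoint and is what makes the borderline $L(n,1)$ summation of the $\Phi_j$'s legitimate: without it, the telescoping would lose a factor logarithmic in the number of scales and the $L(n,1)$ hypothesis would no longer suffice to close \eqref{bm}.
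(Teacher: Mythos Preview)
Your proposal is correct and follows essentially the same separation-of-phases approach as the paper. One organizational point worth noting: rather than a generic dichotomy at each step, the paper arranges the iteration so that the phase switch happens at most once --- the degenerate alternative (Lemma~\ref{rt}, comparison with $p$-harmonic functions, whose $C^{1,\beta}$ regularity is the classical input from \cite{Db}, \cite{Le}, \cite{To} rather than the $f\equiv 0$ case of Theorem~\ref{main1}) is applied until the first scale at which the normalized slope exceeds a fixed threshold, after which the non-degenerate alternative (Lemmas~\ref{app1}--\ref{ap2}) takes over permanently, the slope staying bounded below by a geometric-series argument as in \eqref{ndeg}.
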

  
  In the case  $f \in L^{m}(\R^n)$ with   $m>n$, we  obtain the following regularity result that improves Theorem 1.2 in \cite{APR}.
  
  \begin{thm}\label{main1}
 For $p>1$ and $m>n$, let $u$ be a $W^{2,m}$ viscosity solution of \eqref{m} in $B_1$, where $f \in L^{m}$. Then, $\nabla u \in C^{\alpha_0}(\overline{B_{1/2}})$ for some $\alpha_0=\alpha_0(n, p, m)$. Moreover, we have that the  following estimate holds,
 \[
 || u||_{C^{1,\alpha_0}(B_{1/2})} \leq C(n, p, ||f||_{L^m}, ||u||_{L^{\infty}(B_1)}).
 \]

  \end{thm}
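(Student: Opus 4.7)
The plan is to establish $C^{1,\alpha_0}$ regularity by an iterative flatness-improvement scheme performed dyadically at each $x_0 \in B_{1/2}$. After a standard normalization reducing to $\|u\|_{L^\infty(B_1)} \le 1$ and $\|f\|_{L^m(B_1)} \le \delta_0$ for a small $\delta_0$ to be chosen, I would build a sequence of affine functions $\ell_k(x) = a_k + \langle p_k, x - x_0 \rangle$ satisfying
\[
\|u - \ell_k\|_{L^\infty(B_{\rho^k}(x_0))} \le \rho^{k(1+\alpha_0)} \quad \text{and} \quad |p_{k+1} - p_k| + \rho^k |a_{k+1} - a_k| \le C \rho^{k(1+\alpha_0)},
\]
for a universal radius $\rho \in (0, 1/2)$. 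The hypothesis $m > n$ ensures that the $L^m$-norms of the rescaled source terms decay geometrically (this is essentially Morrey scaling), so the existence of the sequence forces $\nabla u(x_0) = \lim_k p_k$ and yields the desired $C^{1,\alpha_0}$ estimate.

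The engine is an approximation lemma of the following kind: if $v$ is a $W^{2,m}$ viscosity solution of \eqref{it} in $B_1$ with $\|v\|_{L^\infty} \le 1$, a fixed shift vector $p_0 \in \R^n$, and $\|f\|_{L^m} \le \delta$, then $v$ is $\varepsilon$-close in $C^0(\overline{B_{1/2}})$ to a viscosity solution $h$ of the corresponding shifted homogeneous equation
\[
\left( \delta_{ij} + (p-2) \frac{(h_i + (p_0)_i)(h_j + (p_0)_j)}{|\nabla h + p_0|^2} \right) h_{ij} = 0.
\]
This is standard compactness: equi-continuity comes from the Krylov--Safonov-type $C^{0,\alpha}$ estimate for the Pucci class $\mathcal{S}(\lambda, \Lambda, f)$ with $\lambda, \Lambda$ as in \eqref{l}, and the limit $h$ is identified by stability of $W^{2,q}$ viscosity solutions. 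To close the iteration I then need a uniform $C^{1,\gamma}$ estimate, with constants independent of $|p_0|$, for such $h$. Following \cite{W}, I would prove this by separation of phases: given a threshold $\mu > 0$, either $\mathrm{osc}_{B_r} h \le \mu r$ (\textbf{degenerate phase}), in which case the affine approximation of $h$ can be taken constant; or $\mathrm{osc}_{B_r} h > \mu r$ (\textbf{non-degenerate phase}), and a localization argument combined with the Hölder continuity of $h$ produces a sub-ball where $|\nabla h + p_0|$ stays uniformly bounded away from zero, reducing the equation to a linear uniformly elliptic equation with coefficients smooth in $\nabla h$, so that classical Schauder-type interior estimates yield the required improvement-of-flatness.

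With this uniform frozen regularity established, I iterate on the rescalings $u_k(x) = \rho^{-k(1+\alpha_0)}[u(x_0 + \rho^k x) - \ell_k(x_0 + \rho^k x)]$, each of which solves an equation of the form \eqref{it} with shifted gradient $p_k$ and source of $L^m$-norm bounded by a constant times $\rho^{k(1 - n/m - \alpha_0)} \|f\|_{L^m}$. Choosing $\alpha_0 < 1 - n/m$ makes this norm small for all $k$, so at every step the dichotomy above applies and produces the next affine correction of size $C\rho^{k(1+\alpha_0)}$; summing the telescoping differences yields the theorem.

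The principal obstacle is exactly the uniform-in-$|p_0|$ regularity for the shifted homogeneous problem in the non-degenerate phase: as $|p_0| \to \infty$ the matrix $\delta_{ij} + (p-2)(h_i + (p_0)_i)(h_j + (p_0)_j)/|\nabla h + p_0|^2$ degenerates toward a fixed rank-one perturbation of the identity in a prescribed direction, and one must verify that the $C^{1,\gamma}$ estimate remains stable throughout this limit. This is precisely the point \cite{APR} handled via an Ishii--Lions doubling-of-variables argument producing uniform Lipschitz bounds for large $|p_0|$. The separation-of-phases approach sidesteps any such gradient bound by consigning the degenerate case directly to Krylov--Safonov Hölder regularity and the non-degenerate case to extraction of a good sub-ball where linear theory applies; making the universal constants coherent across the dichotomy and the iteration is the main technical difficulty.
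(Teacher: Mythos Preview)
Your scheme diverges from the paper's at a structural level, and the divergence leaves a gap you have not closed. You subtract the affine approximation $\ell_k$ at every step, land on the shifted equation \eqref{it} with drift $p_k$, and then need a $C^{1,\gamma}$ estimate for the homogeneous shifted problem that is \emph{uniform in $|p_0|$}. That uniform estimate is precisely what the paper is organized to avoid (and what \cite{APR} supplied via Ishii--Lions). The paper never subtracts the affine part during the degenerate phase: since $\Delta_p^N$ is $1$-homogeneous, the pure rescaling $v(x)=\ve_0\,u(\eta^{k-1}x)/\eta^{(k-1)(1+\alpha_0)}$ again satisfies $\Delta_p^N v = f_k$, and Lemma~\ref{rt1} compares $v$ directly with a $p$-harmonic function, for which the $C^{1,\beta}$ theory is already available. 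The dichotomy is on the size of the slope $|B_i|$ of the current approximation relative to $\eta^{i\alpha_0}$ (hypothesis $[H1]$ in the proof): as long as it stays small one iterates through Lemma~\ref{rt1}; at the first $k$ where it becomes large, one rescales once more into the regime $|A|\ge 2$ of Lemma~\ref{ap02}, and only \emph{there} subtracts the affine part, obtaining a shifted equation with $|A_k|\ge 1$ maintained at every subsequent step, so that the compactness of Lemma~\ref{ap01} delivers the constant-coefficient linear limit $(\delta_{ij}+(p-2)(A_0)_i(A_0)_j)w_{ij}=0$.

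Your proposed separation of phases on the homogeneous shifted problem does not work as written. The dichotomy $\mathrm{osc}_{B_r} h \le \mu r$ versus $>\mu r$ probes the size of $\nabla h$, but the singularity of the shifted operator sits at $\nabla h+p_0=0$; large oscillation of $h$ yields no lower bound on $|\nabla h+p_0|$ on any sub-ball, and Krylov--Safonov gives only $C^{0,\alpha}$ control of $h$, not a Lipschitz bound that would let you compare $|\nabla h|$ with $|p_0|$. Hence your ``non-degenerate branch $\Rightarrow$ linear theory'' step is unjustified. The dichotomy that actually works is on $|p_0|$ itself (equivalently, on the accumulated slope after rescaling): for bounded $|p_0|$ one adds back $\langle p_0,x\rangle$ and invokes $p$-harmonic regularity; for large $|p_0|$ one runs the compactness of Lemma~\ref{ap01} and the induction of Lemma~\ref{ap02}. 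Carried out that way you would recover the paper's argument, only with an extra layer (first homogeneous, then inhomogeneous) that the paper collapses into a single iteration.
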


\section{Proof of the main results}\label{mn}

\subsection{Proof of Theorem \ref{main}}

 We now fix a universal parameter which plays a crucial role in our compactness arguments.  Let $\beta>0$ be the optimal H\"older exponent such that any arbitrary solution $u$   of
\[
\operatorname{div}(|\nabla u|^{p-2} \nabla u)=0\quad \text{is in $C^{1, \beta}_{loc}$}.
\]
The fact  that $\beta>0$ follows from the regularity results in \cite{Db}, \cite{Le} and \cite{To}. We then   fix some  $\alpha>0$ such that 
\begin{equation}\label{universal}
\alpha < \beta.
\end{equation} 

We now state our first relevant approximation lemma which plays a very crucial role in the separation of phases. This is analogous to Lemma 2.3 in \cite{W}.

\begin{lem}\label{app1}
Let $u$ be a  $W^{2, q}$ viscosity solution of
\begin{equation}\label{at}
\bigg(\delta_{ij} + (p-2) \frac{(\delta u_i+A_i)(\delta u_j +A_j)}{ |\delta \nabla u+A|^2} \bigg) u_{ij}=f\quad \text{in $B_1$},
\end{equation}
 with $|u| \leq 1$, $u(0)=0$  and $|A| \geq 1$.  Given $\tau>0$, there exists  $\delta_0=\delta_0(\tau)>0$  such that if 
\[
\delta, \left(\frac{1}{|B_{3/4}|} \int_{B_{3/4}}  |f|^q \right)^{1/q} \leq \delta_0,
\]
then  for some  $w \in C^{2}(\overline{B_{1/2}})$  with universal $C^{2}$ bounds depending only on $n, p$ and  independent of $|A|$, we have that
\begin{equation}
\begin{cases}
w(0)=0
\\
||w-u||_{L^{\infty}(B_{1/2})} \leq \tau
\end{cases}
\end{equation}


\end{lem}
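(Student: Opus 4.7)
The plan is a compactness-and-contradiction argument in the spirit of \cite{W}. Assume the conclusion fails: then there exist $\tau_0>0$, a sequence $\delta_k\downarrow 0$, vectors $A_k\in\mathbb{R}^n$ with $|A_k|\ge 1$, right-hand sides $f_k$ with $\left(\frac{1}{|B_{3/4}|}\int_{B_{3/4}}|f_k|^q\right)^{1/q}\to 0$, and $W^{2,q}$-viscosity solutions $u_k$ of \eqref{at} (with data $\delta_k,A_k,f_k$) satisfying $|u_k|\le 1$ and $u_k(0)=0$, yet no $w\in C^2(\overline{B_{1/2}})$ with $w(0)=0$ and $\|w\|_{C^2(\overline{B_{1/2}})}$ bounded by some constant depending only on $n,p$ lies within $\tau_0$ of $u_k$ in $L^\infty(B_{1/2})$.

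First I would extract a convergent subsequence. The symmetric matrix $\delta_{ij}+(p-2)v_iv_j/|v|^2$ has eigenvalues $1$ and $p-1$ for every $v\neq 0$, so each $u_k$ belongs to the Pucci class $\mathcal{S}(\lambda,\Lambda,f_k)$ with $\lambda,\Lambda$ as in \eqref{l}. The Krylov--Safonov type interior H\"older estimate of \cite{E} therefore yields uniform $C^{\alpha'}$-bounds for $u_k$ on compact subsets of $B_1$. Passing to a subsequence, $u_k\to u_\infty$ locally uniformly in $B_1$ with $|u_\infty|\le 1$ and $u_\infty(0)=0$; after a further extraction, $A_k/|A_k|\to e\in\partial B_1$, and either $|A_k|\to\infty$ or $A_k\to A_\infty$ with $|A_\infty|\in[1,\infty)$.

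The core step is to show that $u_\infty$ is a viscosity solution of the constant-coefficient linear equation
\begin{equation*}
L_e u := \bigl(\delta_{ij}+(p-2)e_ie_j\bigr)u_{ij}=0\qquad\text{in } B_1.
\end{equation*}
Given a smooth $\phi$ touching $u_\infty$ from above at $x_0$, standard perturbation produces touching points $x_k\to x_0$ for $u_k$. Since $|A_k|\ge 1$ and $\delta_k\to 0$, we have $\delta_k\nabla\phi(x_k)+A_k\neq 0$ for $k$ large, so the semicontinuous envelopes $F^*,F_*$ agree with $F$ at those points; moreover,
\begin{equation*}
\delta_{ij}+(p-2)\frac{(\delta_k\phi_i(x_k)+A_{k,i})(\delta_k\phi_j(x_k)+A_{k,j})}{|\delta_k\nabla\phi(x_k)+A_k|^2}\ \longrightarrow\ \delta_{ij}+(p-2)e_ie_j
\end{equation*}
in both sub-cases (whether $|A_k|\to\infty$ or $|A_k|\to|A_\infty|$). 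Combining this with the $L^q$-viscosity stability theory of \cite{CCCS} and the hypothesis $\|f_k\|_{L^q(B_{3/4})}\to 0$, one passes to the limit in the viscosity inequalities and obtains $L_eu_\infty=0$ in $B_1$ in the viscosity sense.

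Since $L_e$ is a linear uniformly elliptic operator with constant coefficients whose ellipticity constants $\lambda,\Lambda$ depend only on $p$, interior Schauder estimates yield $\|u_\infty\|_{C^2(\overline{B_{1/2}})}\le C(n,p)\|u_\infty\|_{L^\infty(B_1)}\le C(n,p)$. Setting $w:=u_\infty$ provides a function satisfying $w(0)=0$ and the required universal $C^2$-bound; by uniform convergence $\|u_k-w\|_{L^\infty(B_{1/2})}\to 0$, contradicting the assumption for large $k$. The main obstacle is justifying the passage to the limit in the viscosity inequalities when $|A_k|$ may blow up along the subsequence; this is overcome by the key observation that the equation's ellipticity constants are \emph{uniform} in $|A|$ (depending only on $p$), so both the Krylov--Safonov compactness and the $L^q$-stability apply irrespective of the behavior of $|A_k|$.
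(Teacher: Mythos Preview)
Your overall strategy coincides with the paper's: argue by contradiction, use the Krylov--Safonov estimate from \cite{E} to extract a uniformly convergent subsequence $u_k\to u_0$, normalize $A_k/|A_k|\to e$, show that the limit solves the constant-coefficient equation $(\delta_{ij}+(p-2)e_ie_j)(u_0)_{ij}=0$, and then invoke interior $C^2$ estimates for that equation to derive the contradiction. The paper also normalizes by $|A_k|$ at the outset (writing $\tilde\delta_k=\delta_k/|A_k|$, $\tilde A_k=A_k/|A_k|$), which cleanly unifies your two sub-cases $|A_k|\to\infty$ and $|A_k|$ bounded.

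The gap is in the stability step. You write that ``standard perturbation produces touching points $x_k\to x_0$'' and then appeal to ``the $L^q$-viscosity stability theory of \cite{CCCS}''. The paper explicitly points out that Theorem~3.8 of \cite{CCCS} \emph{cannot be directly applied} here, because the operator has singular dependence in the gradient variable and so fails the structure conditions of that theorem. More concretely, the naive touching-point argument breaks down for $W^{2,q}$-viscosity solutions with $f_k\in L^q$: the subsolution inequality at a touching point reads $\limsup_{x\to x_k}\bigl[F_k^*(\nabla\phi,\nabla^2\phi)-f_k\bigr]\ge 0$, and since $f_k$ has no pointwise values this yields nothing useful from the convergence of the coefficient matrix alone. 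Your observation that $\delta_k\nabla\phi(x_k)+A_k\neq 0$ for large $k$ is correct and necessary, but it does not by itself repair the argument.

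What the paper does instead is adapt the \emph{proof} of the CCCS stability theorem via perturbed test functions. Assuming for contradiction that $L_e\phi\le -\varepsilon$ near $x_0$, one checks the algebraic inequality
\[
F_k^*\bigl(\nabla(\phi+\phi_k),\nabla^2(\phi+\phi_k)\bigr)\ \le\ \mathcal{P}^+_{\lambda,\Lambda}(\nabla^2\phi_k)+C_0|\tilde A_k-e|+C_0\tilde\delta_k|\nabla\phi_k|+C_0\tilde\delta_k|\nabla\phi|-\varepsilon,
\]
obtained by adding and subtracting $L_e\phi$ and splitting into the cases $|e-(\tilde A_k+\tilde\delta_k\nabla(\phi+\phi_k))|<\tfrac12$ or $\ge\tfrac12$. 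One then takes $\phi_k\in W^{2,q}$ to be a strong solution (existence from \cite[Cor.~3.10]{CCCS}) of $\mathcal{P}^+_{\lambda,\Lambda}(\nabla^2\phi_k)+C_0\tilde\delta_k|\nabla\phi_k|+C_0|\tilde A_k-e|+C_0\tilde\delta_k|\nabla\phi|=f_k$ in $B_r(x_0)$ with zero boundary data. The generalized maximum principle gives $\|\phi_k\|_{L^\infty}\to 0$, so $\phi+\phi_k-u_k$ still has an interior minimum, while the displayed inequality yields $F_k^*(\nabla(\phi+\phi_k),\nabla^2(\phi+\phi_k))\le f_k-\varepsilon$, contradicting the $W^{2,q}$-viscosity formulation for $u_k$. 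This perturbed-test-function construction is the missing ingredient in your argument.
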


\begin{proof}
We argue by contradiction. If not, then there exists $\tau_0>0$ and   a sequence of pairs  $\{u_k, f_k \}$ that solves \eqref{at} corresponding to $\{\delta_k, A_k\}$ with  $\delta_k \to 0, f_k \to 0\ \text{in $L^q(B_{3/4})$}$ as $k \to \infty$ and  such  that $u_k's$ are not $\tau_0$ close to any such $w$.  We note that the equation satisfied by $u_k$ can be rewritten as 
\begin{equation}\label{at1}
\bigg(\delta_{ij} +(p-2) \frac{(\tilde \delta_k (u_k)_i + (\tilde A_k)_i ) (\tilde \delta_k (u_k)_j + (\tilde A_k)_j)}{ |\tilde \delta_k \nabla u_k + \tilde A_k|^2}  \bigg) (u_k)_{ij} =f_k,
\end{equation}
where $\tilde \delta_k = \frac{\delta_k}{|A_k|}$ and $\tilde A_k= \frac{A_k}{|A_k|}$. Since $|A_k| \geq 1$, we have $\tilde \delta_k \to 0$ as $k \to \infty$.

From the Krylov-Safonov-type estimates as in \cite{E}, we now observe that  $u_k$'s are uniformly H\"older continuous in $\overline{B_{3/4}}$. Therefore, upto a subsequence,  by Arzela-Ascoli we may assume that $u_k \to u_0$ uniformly on $B_{3/4}$ and, moreover, we can also assume that  $\tilde A_k  \to A_0$(by possibly passing to   another subsequence)  such that  $|A_0|=1$. 

We now make the following claim.

\emph{Claim:} $u_0$ solves 
\begin{equation}\label{lim}
\bigg(\delta_{ij} + (p-2) (A_0)_i (A_0)_j \bigg) (u_0)_{ij} =0.
\end{equation}
By standard theory, it suffices to check that $u_0$ is a $C^{2}-$viscosity solution to the above limiting equation.  We note that the stability result in Theorem 3.8 in \cite{CCCS} can not be directly applied here,  because of the singular dependence of the operator in the \say{gradient} variable. We, however, show that the proof of Theorem 3.8 can still be adapted in this situation.  Let $\phi$ be a $C^{2}$ function such that the graph of  $\phi$ strictly touches the graph of $u_0$ from above  at $x_0 \in B_{1/2}$. We show that  at $x_0$, 
\begin{equation}\label{et}
\bigg(\delta_{ij} + (p-2) (A_0)_i (A_0)_j \bigg) \phi_{ij} \geq 0.
\end{equation}
Suppose that  is not the case.  Then, there exists $\ve, \eta, r>0$  small enough such that
\begin{equation}\label{con2}
\begin{cases}
\bigg(\delta_{ij} + (p-2) (A_0)_i (A_0)_j \bigg) \phi_{ij} \leq -\ve\ \text{in $B_r(x_0)$},
\\
\phi - u \geq \eta\ \text{on $\partial B_r(x_0)$}.
\end{cases}
\end{equation}
We now show that for every $k$, there exists   perturbed test functions $\phi + \phi_k$ with $\phi_k \in W^{2,q}$ such  that
\begin{equation}\label{con5}
F_k^{*} ( \nabla (\phi +\phi_k), \nabla^{2} (\phi + \phi_k)) \leq f_k - \ve\ \text{in $B_r(x_0)$},
\end{equation}
where $F_k^{*}$ is the upper semicontinuous relaxation of the operator  in \eqref{at1}. Moreover, we can also ensure that  $ (\phi+ \phi_k) - u_k$ has a minimum  in $B_{r}(x_0)$ for large enough $k's$. This would  then contradict the viscosity formulation for $u_k$ for such $k's$ and hence \eqref{et} would follow. 

Therefore, under the assumption that \eqref{con2} holds, we now show the validity of \eqref{con5}.  We first observe that from \eqref{con2}, the following differential inequality holds,
\begin{align}
F_k^{*} ( \nabla (\phi +\phi_k), \nabla^{2} (\phi + \phi_k))\leq & \mathcal{P}_{\lambda, \Lambda}^{+} (\nabla^2 \phi_k) +C_0 |\tilde A_k - A_0|\\  &+ C_0 \tilde \delta_k |\nabla \phi_k| +  C_0 \tilde \delta_k |\nabla \phi| - \ve,
\notag
\end{align}
where $C_0=C_0(||\nabla^2 \phi||, p, n)$ and $\lambda, \Lambda$ are as in \eqref{l}. This  inequality above follows by  adding and subtracting $\bigg(\delta_{ij} + (p-2) (A_0)_i (A_0)_j \bigg) \phi_{ij}$, by using \eqref{con2}, and then by splitting the considerations depending on whether 
\[
|A_0 - (\tilde A_k + \tilde \delta_k (\nabla \phi + \nabla \phi_k))| < 1/2 \ \text{or}\ > 1/2.
\]

 We now let $\phi_k$ be  a strong solution to the following boundary value problem
\begin{equation}
\begin{cases}
\mathcal{P}_{\lambda, \Lambda}^{+} (\nabla^2 \phi_k) + C_0 |\tilde A_k - A_0|  + C_0 \tilde \delta_k |\nabla \phi_k| +  C_0 \tilde \delta_k |\nabla \phi| = f_k\ \text{in $B_r(x_0)$},
\\
\phi_k=0\ \text{on $\partial B_r(x_0)$}.
\end{cases}
\end{equation}
The existence of such strong $W^{2,q}$ solutions is guaranteed by Corollary 3.10 in \cite{CCCS}. Therefore, with such $\phi_k$, we  have that \eqref{con5} holds.  

We now  observe that since  $f_k \to 0$ in $L^{q}$ and also $\tilde \delta_k , |\tilde A_k - A_0| \to 0$, from the generalized maximum principle,  as in \cite{CCCS},  we have  that 
\[
||\phi_k||_{L^{\infty}(B_r)} \to 0\ \text{as $k \to \infty$}.
\]
Now, since $\phi - u$ has a strict  minimum at $x_0$,  it follows for large $k's$ that  $ (\phi+ \phi_k) - u_k$ has  a  minimum in the  inside of $B_r(x_0)$(since $\phi_k \equiv 0$ on $\partial B_r(x_0)$ and $\phi-u > \eta$ on $\partial B_r(x_0)$). From this, as we mentioned before, \eqref{et} follows. 

Then, by an analogous argument we would have that  the opposite inequality holds in \eqref{et}, when  instead the graph of $\phi$  touches the graph of  $u$ from below at $x_0$ and consequently it  follows that $u_0$ solves \eqref{lim}. Moreover, since $|u_0| \leq 1$,  we have from the classical theory  that $u_0$  is smooth with universal $C^{2}$ bounds in $B_{1/2}$.  This would then be a  contradiction for large enough $k'$s since $u_k \to u_0$ uniformly.  This finishes the proof of the lemma.

\end{proof}

As a  consequence of Lemma \ref{app1}, we have the following result on the  affine approximation of $u$ at $0$,  provided there is a sufficiently large non-degenerate slope  at a certain scale.   As the reader will see, such is ensured by the fast geometric convergence of the approximations.

\begin{lem}\label{ap2}
Let $u$ be  a viscosity  solution of
\[
\bigg(\delta_{ij}+ (p-2) \frac{u_i u_j}{|\nabla u|^2} \bigg) u_{ij}=f\quad \text{in $B_1$},
\]
with  $u(0)=0$. Then, there exists a universal $\delta_0>0$  such that if for some  $A \in \R^n$, satisfying  $M \geq|A| \geq 2$,  we have 
\[
||u- <A,x>||_{L^{\infty}(B_1)} \leq \delta_0,
\]
and also 
\[
\int_{0}^{1}  \bigg( \def\avint{\mathop{\,\rlap{-}\!\!\int}\nolimits} \avint_{B_s} |f|^q\bigg)^{1/q} ds \leq \delta_0^2,
\]
then there exists an affine function  $ L_0$  such that
\begin{equation}\label{difr}
\begin{cases}
1 \leq |\nabla L_0| \leq M+1
\\
|u(x)-  L_0(x) | \leq C|x| K(|x|)
\end{cases}
\end{equation}
Here $K(r)\doteq r^{\alpha/2} +  \int_{0}^{r^{1/2}} ( \def\avint{\mathop{\,\rlap{-}\!\!\int}\nolimits} \avint_{B_s} |f|^q)^{1/q} ds$ and $\alpha$ is the universal parameter as in \eqref{universal}. Moreover $\delta_0$ can be chosen independent of $M$.  In view of Remark \ref{mod}, we note that for $f \in L(n, 1)$,  we have that $K(r) \to 0$ as $r \to 0$.  

\end{lem}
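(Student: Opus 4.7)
The plan is to construct affine approximations of $u$ at geometric scales $r_k = \rho^k$ by iteratively applying Lemma \ref{app1}, with $\rho \in (0, 1/4)$ a universal constant to be fixed. I aim to build vectors $A_k \in \R^n$ with $A_0 := A$, and a modulus $\mu_k \downarrow 0$ satisfying $\mu_k \leq C\,K(r_k)$, such that
\[
\|u - \langle A_k,\cdot\rangle\|_{L^{\infty}(B_{r_k})} \leq C_0\,r_k\,\mu_k, \qquad |A_{k+1}-A_k|\leq C_0\mu_k, \qquad 1\leq |A_k|\leq M+1.
\]
The base case $k=0$ follows at once from the hypothesis $\|u - \langle A, x\rangle\|_{L^\infty(B_1)} \leq \delta_0$ since $\mu_0 \geq 1$.

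For the inductive step, I rescale via $v(y) := (u(r_k y) - \langle A_k, r_k y\rangle)/(C_0 r_k \mu_k)$ on $B_1$, so that $v(0)=0$, $|v|\leq 1$, and a direct computation shows $v$ solves an equation exactly of the form \eqref{at} with parameters $\delta = C_0\mu_k$, $A = A_k$ (of norm at least $1$), and rescaled source $\tilde f(y) = (r_k/C_0\mu_k)\,f(r_k y)$. The key verification is the smallness of $\tilde f$: a change of variables gives $\bigl(\frac{1}{|B_{3/4}|}\int_{B_{3/4}}|\tilde f|^q\bigr)^{1/q} = (r_k/C_0\mu_k)\bigl(\frac{1}{|B_{3r_k/4}|}\int_{B_{3r_k/4}}|f|^q\bigr)^{1/q}$, and exploiting the lower bound $\mu_k \gtrsim \int_{r_k}^{r_k^{1/2}}\bigl(\frac{1}{|B_s|}\int_{B_s}|f|^q\bigr)^{1/q}\,ds$ (obtained from a doubling argument, since for $q<n$ the integral of $(r_k/s)^{n/q}$ on $[r_k, r_k^{1/2}]$ is comparable to $r_k$) shows that this rescaled source can be made arbitrarily small by taking $k$ large and by choosing the constant in the definition of $\mu_k$ large. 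It is precisely the use of the slower cutoff $r^{1/2}$ in $K$ (rather than the naive $r$) that supplies this extra smallness. Once smallness is in place, Lemma \ref{app1} produces $w \in C^{2}(\overline{B_{1/2}})$ with $w(0)=0$, universal $C^{2}$-bounds, and $\|v-w\|_{L^{\infty}(B_{1/2})} \leq \tau$.

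A Taylor expansion of $w$ at $0$ gives, for $y \in B_\rho$,
\[
|v(y) - \nabla w(0)\cdot y| \leq \tau + C^{*}\rho^2 \leq \rho^{1+\alpha/2},
\]
provided $\rho$ is fixed so small that $C^{*}\rho^2 \leq \tfrac{1}{2}\rho^{1+\alpha/2}$ (possible since $\alpha < 2$) and $\tau := \tfrac{1}{2}\rho^{1+\alpha/2}$ is chosen when applying Lemma \ref{app1}. Setting $A_{k+1} := A_k + C_0\mu_k\nabla w(0)$ yields the second inequality with constant $C_0 C^{*}$, and unscaling gives the flatness improvement $\|u-\langle A_{k+1},\cdot\rangle\|_{L^{\infty}(B_{r_{k+1}})} \leq C_0 r_{k+1}(\rho^{\alpha/2}\mu_k)$. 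The modulus $\mu_k$ is therefore built to satisfy a recursion of the form $\mu_{k+1} \geq \rho^{\alpha/2}\mu_k + \sigma_k$, where $\sigma_k$ is an annular $f$-contribution large enough to preserve the smallness of the rescaled source at the next step (a convenient choice is $\mu_k = \rho^{k\alpha/2} + \sum_{j<k}\rho^{(k-j-1)\alpha/2}\sigma_j$), engineered simultaneously to close the induction and to sum to a quantity comparable to $K(r_k)$. The bound $|A_k|\in[1,M+1]$ is preserved by summing $|A_{k+1}-A_k|\leq C_0\mu_k$ and using the Dini-type hypothesis $\int_0^1(\frac{1}{|B_s|}\int_{B_s}|f|^q)^{1/q}\,ds \leq \delta_0^2$ to render $\sum_k \mu_k$ arbitrarily small upon taking $\delta_0$ small; since the universal constants $C_0, C^{*}$ depend only on $n, p, \alpha$, the threshold $\delta_0$ can be chosen independently of $M$, as required.

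The sequence $A_k$ thus converges to some $A_\infty$ with $1 \leq |A_\infty| \leq M+1$. Setting $L_0(x) := \langle A_\infty, x\rangle$, for any $x \in B_{1/2}$ with $r_{k+1} < |x| \leq r_k$,
\[
|u(x) - L_0(x)| \leq |u(x) - \langle A_k, x\rangle| + |x|\sum_{j\geq k}|A_{j+1}-A_j| \leq C_0 r_k\mu_k + C|x|\sum_{j\geq k}\mu_j \leq C|x|K(|x|),
\]
where the last inequality uses the summability of the tail $\sum_{j\geq k}\mu_j$ and its comparability to $K(r_k)\asymp K(|x|)$. The principal obstacle is the careful engineering of the sequence $\mu_k$: it must simultaneously (a) remain large enough relative to the $L^q$-average of $f$ at each scale so that the rescaled source in Lemma \ref{app1} is below the $\delta_0$-threshold, and (b) telescope to a final bound comparable to $K(r)$. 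This dual requirement is exactly what forces the slower $r^{1/2}$-cutoff in $K$, and is the essential technical point of the argument.
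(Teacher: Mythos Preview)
Your proposal is correct and follows essentially the same route as the paper: iterated rescaling and application of Lemma~\ref{app1} at geometric scales, with a convolution-type modulus (your $\mu_k$, the paper's $\omega(r^k)=\tfrac{1}{\delta_0}\sum_{i\le k} r^{i\alpha}\omega_1(\tfrac{3}{4}r^{k-i})$) engineered to dominate the rescaled source and to sum to $K(r)$, together with the non-degeneracy check $|A_k|\ge 1$ via $\sum|A_{k+1}-A_k|\lesssim\delta_0$. The only cosmetic differences are that the paper takes $\omega_1(t)=\max\bigl(t(\hint_{B_t}|f|^q)^{1/q},\,\tfrac{4}{3}\delta_0^2 t\bigr)$, which makes the smallness of the rescaled $f$ a one-line division (their (3.14)) rather than your doubling estimate, and it uses the exponent $\alpha$ (with the ``$\alpha$-decreasing'' property $r^\alpha\omega(r^k)\le\omega(r^{k+1})$) where you use $\alpha/2$; your $\sigma_j$ should be pinned down explicitly (e.g.\ $\sigma_{k-1}\ge\int_{r_k}^{r_k^{1/2}}(\hint_{B_s}|f|^q)^{1/q}\,ds$) to make the smallness step rigorous, but once that is done the arguments coincide.
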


\begin{proof}

 We will show that  for  for every $k=0,1,2, \ldots$, there exist linear functions \linebreak $\tilde  L_k x\doteq <A_k, x>$  such that
\begin{equation}\label{cl}
\begin{cases}
||u-\tilde L_k||_{L^{\infty}(B_{r^k})} \leq  r^k \omega(r^k),
\\
|A_k- A_{k-1}| \leq C  \omega(r^{k-1}),
\end{cases}
\end{equation}
for some $r<1$  universal, independent of $\delta_0$.  Here we let for a given $k$,

\begin{equation}\label{om}
\omega(r^k)= \frac{1}{\delta_0} \sum_{i=0}^k r^{i \alpha} \omega_1\left(\frac{3}{4}r^{k-i}\right),
\end{equation}
with  $\omega_1$ defined in the following way
\[
\omega_1(t)=  \max\ \left( t\left( \def\avint{\mathop{\,\rlap{-}\!\!\int}\nolimits} \avint_{B_t
} |f|^q\right)^{1/q}, \delta_0^2 \frac{4}{3} t\right).
\]
We note that $\delta_0$ is to be fixed later.  We also let $A_0\doteq A$.  Now, suppose $A_k$ exists  upto some $k$ with the bounds as in \eqref{cl}. Then, we observe that 
\begin{align}\label{ndeg}
|A_k| &\geq |A_0| -  ( |A_1-  A_0| +\ldots+ |A_k- A_{k-1}|) 
\\
& > 2 -  C  \sum \omega(r^i) > 2 -   \frac{C}{\delta_0} \sum \omega_1 \left(\frac{3}{4}r^i\right)\ \left(\text{using the Cauchy product formula}\right)
\notag
\\
& \geq 2 - C_1 \delta_0 > 1\  \left(\text{if $\delta_0$ is small enough}\right).\notag
\end{align}
In the last inequality above  we also used the fact that
\begin{align}\label{2.0}
 \sum \omega_1(\frac{3}{4} r^i) &\leq C\left( \delta_0^2 \sum r^i +  \sum  \frac{3 r^{i}}{4} \left( \def\avint{\mathop{\,\rlap{-}\!\!\int}\nolimits} \avint_{B_{\frac{3r^i}{4}}} |f|^q\right)^{1/q}  \right)
\\
& \leq C\left( \delta_0^2  +  \int_{0}^{1}  \left( \def\avint{\mathop{\,\rlap{-}\!\!\int}\nolimits} \avint_{B_s} |f|^q\right)^{1/q} ds \right) \leq C_2 \delta_0^2.
\notag
\end{align}
Note that the last inequality in \eqref{2.0} is a consequence of the following  estimate 
\[
\sum  \frac{3 r^{i}}{4} \left( \def\avint{\mathop{\,\rlap{-}\!\!\int}\nolimits} \avint_{B_{\frac{3r^i}{4}}} |f|^q\right)^{1/q}  \leq C \int_{0}^{1}  \left( \def\avint{\mathop{\,\rlap{-}\!\!\int}\nolimits} \avint_{B_s} |f|^q\right)^{1/q} ds,
\]
which in turns  follows by  breaking the integral in the above expression  into integrals  over dyadic  subintervals of the type $[\frac{3}{4} r^{i}, \frac{3}{4} r^{i-1}]$. 

Thus the estimate in \eqref{ndeg}   ensures that the non-degeneracy condition in Lemma \ref{app1} holds for every $k$. We prove the claim in \eqref{cl} by induction.  From the hypothesis of the lemma, the case when $k=0$ is easily verified  with $A_0=A$ with our choice of $\omega$.  Let us now assume that the claim as in \eqref{cl} holds upto some $k$. We then  consider  
\[
v= \frac{u - \tilde  L_k(r^k x)}{ r^k \omega(r^k)},
\]
which solves
\begin{equation}
\bigg(\delta_{ij} + (p-2) \frac{(\omega(r^k) v_i + (A_k)_i)(  \omega(r^k) v_j + (A_k)_j)}{| \omega(r^k) \nabla v+  A_k|^2} \bigg) v_{ij}= \frac{r^k}{ \omega(r^k)} f(r^k x).
\end{equation}
Now, by a change of variable formula and the definition of $\omega$ it follows that,  with
\[
f_k(x)= \frac{r^k}{ \omega(r^k)} f(r^k x),
\]
we have 
\begin{align}\label{com1}
 \bigg(\frac{1}{|B_{3/4}|} \int_{B_{3/4}}  |f_k|^q \bigg)^{1/q} &= \frac{r^k}{ \omega(r^k)} \bigg(\frac{1}{|B_{3r^{k}/4 }|} \int_{B_{\frac{3 r^k}{4}}} |f(y)|^q dy \bigg) ^{1/q}
\\
& \leq \frac{r^k}{\omega_1(\frac{3 r^k}{4}) \frac{1}{\delta_0}} \bigg(\frac{1}{|B_{3r^{k}/4 }|} \int_{B_{\frac{3 r^k}{4}}} |f(y)|^q dy \bigg) ^{1/q}
\notag
\\
& \leq \frac{ r^{k} \bigg(\frac{1}{|B_{3r^{k}/4 }|} \int_{B_{\frac{3 r^k}{4}}} |f(y)|^q dy \bigg) ^{1/q}}{ \frac{3 r^{k}}{4 \delta_0}  ( \def\avint{\mathop{\,\rlap{-}\!\!\int}\nolimits} \avint_{B_{\frac{3 r^k}{4}}} |f(y)|^q  
dy)^{1/q}}
\notag
\\
 &\leq \frac{4}{3} \delta_0.
\notag
\end{align}
Moreover,
\[
\omega(r^k) \leq \sum \omega(r^i) \leq C_0 \delta_0.
\]

 Therefore, $v$ satisfies an equation for which the conditions in Lemma \ref{app1} are satisfied. Consequently for a given $\tau>0$, we can find $\delta_0>0$ such that for some $w$ with universal $C^{2}$ bounds  we have that $||w-v||_{L^{\infty}(B_{1/2})} \leq \tau$. Now since $w$ has uniform $C^{2}$ bounds and $w(0)=0$,  there exists a  universal $C>0$ such that
\[
|w-Lx| \leq C|x|^2,
\]
where $L$ is the linear approximation for $w$ at $0$. We then choose $r$  small enough such that
\[
Cr^2= \frac{r^{1+\alpha}}{2},
\]
where $\alpha$ is as in \eqref{universal}. Subsequently, we  let $\tau= \frac{r^{1+\alpha}}{2}$ which decides the choice of $\delta_0$.  Then, by an application of triangle inequality we have,
\[
||v- L||_{L^{\infty}(B_r)} \leq r^{1+\alpha}.
\]

 Consequently by scaling back to $u$  we obtain
\begin{equation}\label{sc}
||u - \tilde  L_{k+1}||_{L^{\infty}(B_{r^{k+1}})} \leq  r^{k+1} r^{\alpha} \omega(r^k) \leq  r^{k+1} \omega(r^{k+1}),
\end{equation}
where  $\tilde L_{k+1}(x)\doteq \tilde L_k +  r^k \omega(r^k) L\bigg(\frac{x}{r^k}  \bigg)$. Note that in the last inequality in \eqref{sc} we also  used the following $\alpha-$decreasing property of $\omega$
\begin{equation}\label{dec}
r^{\alpha} \omega(r^k) \leq \omega(r^{k+1}),
\end{equation}
which is easily seen from the expression of $\omega$ as in \eqref{om} (see also the proof of  Lemma 4.7 in \cite{AB}).  This verifies the induction step. The conclusion now  follows by a standard analysis argument as in the proof of Lemma 4.9 in  \cite{AB}.

\end{proof}

The next result  is an  improvement of flatness result that allows to handle the case when the affine approximation have  small slopes  at a   \say{$k$th step}.  This corresponds to the degenerate alternative    in the iterative argument  in the proof of the main result Theorem \ref{main}.     
\begin{lem}\label{rt}
Let $u$ be a solution to
\begin{equation}\label{j1}
\bigg(\delta_{ij}+ (p-2) \frac{u_i u_j}{|\nabla u|^2} \bigg) u_{ij}=f\quad \text{in $B_1$},
\end{equation}
 with $|u| \leq 3$ and $u(0)=0$.  There exists  a universal $\ve_0>0$ such that if 
\begin{equation}\label{sml1}
\int_{0}^{1}  \left( \def\avint{\mathop{\,\rlap{-}\!\!\int}\nolimits} \avint_{B_s} |f|^q\right)^{1/q} ds \leq \ve_0,
\end{equation}
then there exists an affine function $L$,  with universal bounds,  and a universal $\eta \in (0,1)$  such that
\[
||u- L||_{L^{\infty}(B_{\eta})} \leq \delta_0 \eta^{1+\alpha}.
\]
Here $\delta_0$ is as in Lemma \ref{ap2} above. Without loss of generality we may take $\ve_0 < \delta_0^2$. 

\end{lem}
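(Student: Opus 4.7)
The plan is to argue by contradiction and compactness, following the classical improvement-of-flatness scheme. Assume the statement fails. Then for every universal $\eta\in(0,1)$ one can produce sequences $\{u_k,f_k\}$ with $u_k$ a $W^{2,q}$ viscosity solution of \eqref{j1} in $B_1$, $|u_k|\leq 3$, $u_k(0)=0$, and $\int_{0}^{1}\left(\frac{1}{|B_s|}\int_{B_s}|f_k|^q\right)^{1/q}ds\to 0$, yet no affine function $L$ with universally bounded coefficients satisfies $\|u_k-L\|_{L^\infty(B_\eta)}\leq \delta_0\eta^{1+\alpha}$. Since $u_k\in\mathcal{S}(\lambda,\Lambda,f_k)$ with $\|u_k\|_{L^\infty}\leq 3$ and $\|f_k\|_{L^q(B_{7/8})}\to 0$, the Krylov-Safonov type H\"older estimates from \cite{E} give equicontinuity on $\overline{B_{7/8}}$, so up to a subsequence $u_k\to u_\infty$ uniformly on $B_{7/8}$, with $|u_\infty|\leq 3$ and $u_\infty(0)=0$.

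The second step is to identify the limit equation, namely that $u_\infty$ is a viscosity solution of $\Delta_p^N u_\infty = 0$ in $B_{7/8}$. Because $\Delta_p^N$ is singular at $\nabla u = 0$, the standard stability result (Theorem~3.8 in \cite{CCCS}) does not apply directly, and I would replay the perturbed test function argument given in the proof of Lemma~\ref{app1}: for a $C^2$ function $\phi$ strictly touching $u_\infty$ from above at some $x_0\in B_{1/2}$ with $\Delta_p^N\phi(x_0)<-\ve$, one builds $\phi_k\in W^{2,q}$ solving $\mathcal{P}_{\lambda,\Lambda}^{+}(\nabla^2\phi_k) = f_k$ with zero boundary data on a small ball around $x_0$ (via Corollary~3.10 in \cite{CCCS}), and the generalized maximum principle yields $\|\phi_k\|_{L^\infty}\to 0$. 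Then $\phi+\phi_k$ becomes an admissible $W^{2,q}$ test against $u_k$ that contradicts its viscosity formulation for large $k$. Reversing the inequalities gives the supersolution property.

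With the limit equation in hand, the classical regularity theory of \cite{Db}, \cite{Le}, \cite{To}, together with the known equivalence between viscosity solutions of $\Delta_p^N v = 0$ and weak solutions of $\operatorname{div}(|\nabla v|^{p-2}\nabla v)=0$, yields $u_\infty\in C^{1,\beta}_{\rm loc}(B_{7/8})$ with estimates depending only on $n, p$. Hence there exists an affine function $L$ with $|L(0)|+|\nabla L|\leq C(n,p)$ such that $|u_\infty(x)-L(x)|\leq C(n,p)|x|^{1+\beta}$ for $x\in B_{1/2}$. Since $\alpha<\beta$ by \eqref{universal}, we may fix a universal $\eta\in(0,1)$ small enough that $C(n,p)\,\eta^{1+\beta}\leq \tfrac{\delta_0}{2}\eta^{1+\alpha}$; the uniform convergence $u_k\to u_\infty$ then forces $\|u_k-L\|_{L^\infty(B_\eta)}\leq \delta_0\eta^{1+\alpha}$ for all sufficiently large $k$, a contradiction.

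The main obstacle is the stability step, since the singularity of $\Delta_p^N$ at zero gradient prevents a direct application of the usual viscosity stability theorem; but this is handled exactly by the perturbed test function construction already deployed in Lemma~\ref{app1}, and the smallness hypothesis \eqref{sml1} (with the stipulation $\ve_0<\delta_0^2$) provides the $L^q$-smallness needed to drive $\|\phi_k\|_{L^\infty}\to 0$. The remaining ingredients, namely Krylov-Safonov compactness, $C^{1,\beta}$ regularity of $p$-harmonic functions, and the slack $\alpha<\beta$ built into \eqref{universal}, enter routinely.
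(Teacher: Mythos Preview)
Your approach is essentially the paper's: contradiction/compactness, pass to a $p$-harmonic limit, use its $C^{1,\beta}$ regularity and the slack $\alpha<\beta$ to extract the affine approximation at a universal scale $\eta$. The paper packages this as a separate approximation lemma (closeness to some $p$-harmonic $w$) followed by a triangle inequality, but the content is identical.

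One point needs care: the stability step is \emph{not} a verbatim replay of Lemma~\ref{app1}. In Lemma~\ref{app1} the non-degeneracy of the coefficients is supplied by the limit vector $A_0$ with $|A_0|=1$, so the singular set $\{\nabla(\phi+\phi_k)=0\}$ is automatically avoided. Here there is no such vector, and if $\nabla\phi(x_0)=0$ the perturbed test function inequality you wrote cannot be derived. The paper handles this by invoking \cite{JLM} \emph{before} the stability argument: to conclude $p$-harmonicity it suffices to verify the viscosity inequalities only at touching points where $\nabla\phi(x_0)\neq 0$. Under that restriction one has $|\nabla\phi|\geq\kappa>0$ in a small ball, and the correct perturbed equation is
\[
\mathcal{P}^{+}_{\lambda,\Lambda}(\nabla^2\phi_k)+C(\kappa,\|\nabla^2\phi\|)\,|\nabla\phi_k|=f_k,\qquad \phi_k=0\ \text{on }\partial B_r(x_0),
\]
with the first-order term arising because the coefficients of $\Delta_p^N$ shift by $O(|\nabla\phi_k|/\kappa)$ under the perturbation. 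Your citation of \cite{JLM} only for the viscosity/weak equivalence, and your equation $\mathcal{P}^{+}_{\lambda,\Lambda}(\nabla^2\phi_k)=f_k$ without the gradient term, glosses over exactly this adjustment. It is a minor fix, but without it the argument as written does not go through at degenerate touching points.
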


\begin{proof}

First note that  \eqref{sml1} implies that
\[
||f||_{L^q(B_{3/4})} < C\ve_0.
\]
We first show that given $\kappa>0$, there exists $\ve_0>0$ such that if  $u$ solves \eqref{j1} and $f$ satisfies the bound in \eqref{sml1}, then there exists a $p-$harmonic function $w$  such that
\begin{equation}\label{close}
||w-u||_{L^{\infty}(B_{1/2})} \leq  \kappa.
\end{equation}
Assume that \eqref{close} actually holds. It  then follows from the  $C^{1, \beta}$ regularity results for $p$-harmonic functions  in \cite{Db}, \cite{Le} and \cite{To} that there exists an affine function $L$ such that
\[
|w(x) -L(x)| \leq C|x|^{1+\beta}.
\]
We now choose  $\eta>0$ such that 
\[
C\eta^{1+\beta}= \frac{\delta_0}{2} \eta^{1+\alpha}\ \left(\text{This  crucially uses $\alpha < \beta$}\right).
\]
Subsequently, we choose $\kappa= \frac{\delta_0}{2} \eta^{1+\alpha}$, and this decides the choice of $\ve_0$. The conclusion of the lemma now  follows by an application of the triangle inequality. 

We are now going to prove \eqref{close}. Then there exists $\kappa_0>0$ and a sequence of pairs  $\{u_k, f_k\}$ which solves \eqref{j1} with $f_k$ satisfying \eqref{sml1} ( with $\ve_0=\frac{1}{k}$) such that $u_k$ is not $\kappa_0$ close to any such $w$. Then from uniform Krylov-Safanov type  H\"older estimates as in \cite{E} and Arzela-Ascoli, it follows that $u_k \to u_0$ uniformly in $B_{1/2}$ upto a subsequence.  We amke the following claim.
\medskip

 \emph{Claim: $u_0$ is $p-$harmonic.}
 \medskip
 
Once the claim is established, this would then be a contradiction for large enough $k$'s and thus \eqref{close} would follow. 

The proof is similar to that of the \emph{Claim} in  Lemma \ref{app1}. As before, we note that the stability  result in Theorem 3.8 in \cite{CCCS} cannot be directly applied because the operator $\Delta_{p}^N$ does not satisfy the structural assumptions in \cite{CCCS} because of singular dependence in the \say{gradient} variable.   We first observe  that it follows from  \cite{JLM} that in order to show that $u_0$ is $p-$ harmonic, it suffices to show that $u_0$ satisfies the viscosity formulation at points where the gradient of the test function does not vanish.

  Let $\phi$ be a $C^{2}$ test function which   strictly touches the graph of  $u$ from above at some point  $x_0 \in B_{1/2}$ such that  $\nabla \phi(x_0) \neq 0$.  We claim that 
\begin{equation}\label{cl0}
\Delta_{p}^N \phi (x_0) \geq 0.
\end{equation}
Suppose such is not the case. Then  there exists $\ve,r, \delta>0$ small enough such that
\begin{equation}\label{l1}
\begin{cases}
\Delta_{p}^N \phi(x) \leq - \ve\ \text{for $x \in B_r(x_0)$},
\\
\phi - u > \delta\ \text{on $\partial B_r(x_0)$}.
\end{cases}
\end{equation}
Moreover, we can also assume that in $B_r(x_0)$, we have that
\begin{equation}\label{ph}
|\nabla \phi | \geq \kappa>0.
\end{equation}
We now show that for every $k$, there exists   perturbed test functions $\phi + \phi_k$ with $\phi_k \in W^{2,q}$ such  that
\begin{equation}\label{con}
F^{*} ( \nabla (\phi +\phi_k), \nabla^{2} (\phi + \phi_k)) \leq f_k - \ve\ \text{in $B_r(x_0)$},\ \text{ with $F^{*}$ as in \eqref{up}}.
\end{equation}
Moreover, we can also ensure that  $ (\phi+ \phi_k) - u_k$ has a minimum  in $B_{r}(x_0)$ for large enough $k's$. This would  then contradict the viscosity formulation for $u_k$, and hence \eqref{cl0} would follow.   In an entirely analogous way, we will have that if  a $C^{2}$ test function strictly touches $u$ from below at $x_0$ then
\[
\Delta_{p}^N \phi (x_0) \leq 0,
\]
and consequently  we can assert  from the results in \cite{JLM} that $u_0$ is $p$-harmonic.  

Hence under the assumption that \eqref{l1} is valid, we now turn our attention to establish  \eqref{con}. We first observe that  because of \eqref{l1},  \eqref{ph},  the following inequality holds,
\begin{align}\label{s1}
& F^{*} ( \nabla (\phi +\phi_k), \nabla^{2} (\phi + \phi_k))  \leq \mathcal{P}^{+}_{\lambda, \Lambda} (\nabla^2 \phi_k)  + C(\kappa, ||\nabla^2 \phi||) |\nabla \phi_k|  -\ve,
\end{align}
with $\lambda, \Lambda$ as in \eqref{l}.
Here $\mathcal{P}_{\lambda, \Lambda}^{+}$ is the maximal Pucci operator defined as   in \eqref{max}. This inequality again follows by adding and subtracting $\Delta_{p}^N \phi$, by using \eqref{l1} and then by splitting considerations depending on whether
\[
|\nabla \phi_k| < \kappa/2\ \text{or}\ > \kappa/2.
\]
At this point, given $k$, we   look for $\phi_k$ which is a strong solution to
\begin{equation}\label{s}
\begin{cases}
 \mathcal{P}^{+}_{\lambda, \Lambda} (\nabla^2 \phi_k)  + C(\kappa, ||\nabla^2 \phi||) |\nabla \phi_k|= f_k\ \text{in $B_r(x_0)$},
\\
\phi_k= 0\ \text{on $\partial B_r(x_0)$}.
\end{cases}
\end{equation}
The existence of such strong solutions is again guaranteed by Corollary 3.10 in \cite{CCCS}. Moreover since $f_k \to 0$ in $L^{q}$, therefore  from the generalized maximum principle we have that 
\[
||\phi_k||_{L^{\infty}(B_r)} \to 0\ \text{as $k \to \infty$}.
\]
Now since $\phi - u$ has a strict  minimum at $x_0$, it follows that for large $k's$ that  $ (\phi+ \phi_k) - u_k$ would have a minimum in the inside of $B_r(x_0)$( since $\phi_k \equiv 0$ on $\partial B_r(x_0)$ and $\phi-u > \delta$ on $\partial B_r(x_0)$). However because  of \eqref{s1} and \eqref{s}  we also  have that  \eqref{con} holds which violates the viscosity formulation for $u_k$'s for large enough $k's$. Thus in view of our discussion above, we can assert that $u_0$ is $p-$harmonic  and  this concludes  the proof.

 \end{proof}

With this Lemma \ref{ap2} and Lemma \ref{rt} in hand, we now proceed with the proof of our main result.

\begin{proof}[Proof of Theorem \ref{main}]

 We will show that there exists an affine function $\tilde L$ such that
\begin{equation}\label{des}
|u(x)- \tilde L(x)| \leq C|x| K_0(4 |x|),
\end{equation}
 where $K_0(|x|)$ is defined as 
\[
K_0(|x|) \doteq \bigg(  \int_0^1  \left( \def\avint{\mathop{\,\rlap{-}\!\!\int}\nolimits} \avint_{B_s} |f|^q\right)^{1/q} ds  + 1 \bigg) |x| ^{\alpha/4} +    C_0(\alpha) \int_{0}^{|x|^{1/4}} \left( \def\avint{\mathop{\,\rlap{-}\!\!\int}\nolimits} \avint_{B_s} |f|^q\right)^{1/q} ds\],
and some universal $C$.

Likewise a similar affine approximation holds at all points in $B_{1/2}$ and consequently the estimates in \eqref{bm} follow by a standard real analysis argument. 

\medskip

We may assume that $u(0)=0$.  Now with  $\eta, \ve_0$ as in lemma \ref{rt} and  $\delta_0$ as in  lemma \ref{ap2},   assume the following hypothesis for a given $i \in \mathbb{N}$,
\begin{equation*}\label{H}[H]
\begin{cases}
\text{There exists affine function $L_i(x)\doteq <B_i, x>$ such that}\  ||u-L_i||_{L^{\infty}(B_{\eta^i})}  \leq \delta_0 \eta^i  \omega( \eta^i)
\\
\text{and}\ |B_i|  \leq 2  \omega(\eta^i).
\\
\end{cases}
\end{equation*}
Here $\omega$ is defined instead as 
\begin{equation}\label{om1}
\omega(\eta^k)\doteq \frac{1}{\ve_0} \sum_{i=0}^k \eta^{i \alpha} \omega_1(\eta^{k-i}),
\end{equation}
where we let $\omega_1$ to be
\[
\omega_1(t)\doteq  \max\left( \int_{0}^t\left( \def\avint{\mathop{\,\rlap{-}\!\!\int}\nolimits} \avint_{B_s
} |f|^q\right)^{1/q}ds,   t\right).
\]

By multiplying $u$ with a suitable constant we can assume that the Statement $[H]$ holds when $i=0$ with $L_0=0$.  Let $k$ be the first integer such that the Statement $[H]$  breaks. Then there  are two possibilities.

\emph{Case 1:} Suppose $k=\infty$. Then let given $x$, let $i\in \mathbb{N}$ be such that $|x| \sim \eta^{i}$. Then from the inequalities in $[H]$ and triangle inequality, it follows that
\begin{equation}\label{g1}
|u(x)| \leq |u(x) - <B_i, x>| +  |<B_i ,x>| \leq C_1 \eta^{i} \omega(\eta^i) \leq C_ |x| \omega( 2|x|) \leq  C |x| K_0( 4 |x|),
\end{equation}
and thus \eqref{des} follows with $\tilde L=0$. The last inequality in \eqref{g1} is seen as follows:
\begin{align}\label{b0}
 \omega(\eta^i)&= \frac{1}{\ve_0} \sum_{j=0}^i \eta^{j \alpha} \omega_1(\eta^{i-j})
\\
& \leq C \omega_1(\eta^{i/2}) \sum_{j=0}^{i/2}   \eta^{j\alpha} +  C \omega_1(1) \sum_{j=i/2}^{i} \eta^{j\alpha}\quad (\text{here we use $\omega_1$ is increasing})
\notag
\\
& \leq C \bigg(  \int_0^1  \left( \def\avint{\mathop{\,\rlap{-}\!\!\int}\nolimits} \avint_{B_s} |f|^q)^{1/q} ds + 1 \right) \eta ^{i\alpha/2} +    C_0(\alpha) \int_{0}^{\eta^{i/2}} \left( \def\avint{\mathop{\,\rlap{-}\!\!\int}\nolimits} \avint_{B_s} |f|^q\right)^{1/q} ds
\notag
\\
& \leq C K_0( 4 |x|)\quad (\text{using $|x| \sim \eta^{i}$}).
\notag
\end{align}

\emph{Case 2:} Suppose  instead that $k < \infty$. Then we have that the Statement $[H]$ is satisfied upto $k-1$. Now let
\[
v(x)\doteq \frac{u(\eta^{k-1}x)}{\eta^{k-1} \omega(\eta^{k-1})},
\]
which solves
\[
\bigg( \delta_{ij} +(p-2)\frac{v_i v_j}{|\nabla v|^2} \bigg) v_{ij}= \frac{\eta^{k-1} f(\eta^{k-1} x)}{\omega(\eta^{k-1})}.
\]
Moreover, from  the estimates in \eqref{H} for $i=k-1$ it follows that $|v| \leq 2+ \delta_0 \leq 3$.  Also by change of variable, we have that for $f_k(x)= \frac{\eta^{k-1} f(\eta^{k-1} x)}{\omega(\eta^{k-1})}$, the following holds,  
\begin{align}\label{d1}
& \int_{0}^{1}  \left( \def\avint{\mathop{\,\rlap{-}\!\!\int}\nolimits} \avint_{B_s} |f_k|^q\right)^{1/q} ds
\\
& \leq \ve_0  \frac{ \eta^{k-1} \int_{0}^{1}  \left( \def\avint{\mathop{\,\rlap{-}\!\!\int}\nolimits} \avint_{B_s} |f(\eta^{k-1} x)|^q dx\right)^{1/q} ds}{  \int_{0}^{\eta^{k-1}}  {\left( \def\avint{\mathop{\,\rlap{-}\!\!\int}\nolimits} \avint_{B_s} |f|^q dx\right)^{1/q}}}\notag
\\
& = \ve_0  \frac{ \int_{0}^{\eta^{k-1}}  ( \def\avint{\mathop{\,\rlap{-}\!\!\int}\nolimits} \avint_{B_s} |f|^q dx)^{1/q} ds}{  \int_{0}^{\eta^{k-1}}  {( \def\avint{\mathop{\,\rlap{-}\!\!\int}\nolimits} \avint_{B_s} |f|^q dx)^{1/q}} ds}\quad \left(\text{by change of variable}\right)
\notag
\\ 
&=\ve_0.
\notag
\end{align}
Here we have used also that
 \[
 \omega(\eta^{k-1}) \geq  \frac{1}{\ve_0} \int_{0}^{\eta^{k-1}}  \left( \def\avint{\mathop{\,\rlap{-}\!\!\int}\nolimits} \avint_{B_s} |f|^q dx\right)^{1/q}.
 \]

Hence, $v$  solves  an equation of the type \eqref{m} such that   the hypothesis in Lemma \ref{rt} is satisfied. Therefore, by applying Lemma \ref{rt}, we obtain that there exists an affine function $Lx= <\tilde A, x>$ such that
\[
||v- L||_{L^{\infty}(B_{\eta})}\leq  \delta_0 \eta^{1+\alpha}.
\]
Scaling back to $u$, we obtain with $L_{k} x\doteq <B_k, x>$, where $B_k\doteq\omega(\eta^{k-1})\tilde Ax$, that 
\begin{equation}\label{est2}
 ||u-L_k||_{L^{\infty}(B_{\lambda^k})}  \leq \delta_0 \eta^k \eta^{\alpha} \omega( \eta^{k-1}) \leq \delta_0 \eta^k \omega( \eta^k),
 \end{equation}
 where in the last inequality, we used the $\alpha-$decreasing property of $\omega$ (as  in \eqref{dec}). This property is easily seen from the expression of $\omega$  in \eqref{om1}.  However, since the Statement  $[H]$ does not hold for $i=k$,  we must  necessarily have
 \begin{equation}\label{f0}
 |B_k| \geq 2 \omega(\eta^k).
 \end{equation}
 We now let 
 \[
 \tilde v= \frac{u(\eta^k x)}{\eta^k  \omega(\eta^k)}.
 \]
 Then, we observe that  $\tilde v$ solves
 \[
 \bigg(\delta_{ij}+(p-2) \frac{\tilde v_i \tilde v_j}{|\nabla \tilde v|^2} \bigg)\tilde v_{ij} =\frac{\eta^k f(\eta^k x)}{\omega(\eta^k)}.
 \]
 Moreover, from \eqref{est2} we have, with 
 \begin{equation}\label{a}
 A= \frac{\omega(\eta^{k-1})\tilde A}{\omega(\eta^k)},
 \end{equation}
 that the following inequality holds
 \begin{equation}\label{est4}
 ||\tilde v - <A,x>||_{L^{\infty}(B_1)} \leq  \delta_0.
 \end{equation}
 Moreover, using that $|\tilde A| \leq C$, where $C$ is universal, and the $\alpha-$decreasing property of $\omega$, we obtain 
 \begin{equation}\label{est5}
 |A| = \frac{|\tilde A| \eta^{\alpha} \omega(\eta^{k-1})}{ \eta^\alpha \omega(\eta^{k})} \leq \frac{C}{\eta^{\alpha}}.
 \end{equation}
 Also \eqref{f0} implies
 \[
 |A| \geq 2.
 \]
 Now again by   change of variables it is seen that   $\tilde f_k$,  defined by
\begin{equation}\label{d10} 
\tilde f_k(x)\doteq  \frac{\eta^k f(\eta^k x)}{ \omega(\eta^k)},
\end{equation}
satisfies the estimate as in \eqref{d1}. 
Now using the fact that $\ve_0 < \delta_0^2$, we find that  $\tilde v$ satisfies the conditions in Lemma \ref{ap2}. Hence,  there exists an affine function $L_0 x\doteq  <A_0, x>$, with universal bounds depending on $\eta$( more specifically on $\frac{C}{\eta^{\alpha}}$), such that
\begin{equation}\label{tv}
|\tilde v(x)- L_0(x)| \leq C|x|  K_{\tilde f_k}(|x|),\quad |x|<1,
\end{equation}
where $ K_{\tilde f_k}(|x|)= |x|^{\alpha/2} + \int_{0}^{|x|^{1/2}} ( \def\avint{\mathop{\,\rlap{-}\!\!\int}\nolimits} \avint_{B_s} |\tilde f_k|^q)^{1/q} ds$ with $\tilde f_k$ as in \eqref{d10}.  Then, by scaling back to $u$, letting $\eta^{k} x$ as our new $x$,  we obtain for $|x| \leq \eta^k$ that the following holds by change of variables,
\begin{align}\label{b1}
& |u(x)- \omega(\eta^k) <A_0, x>| \leq C |x| \left(  \omega(\eta^k) |y|^{\alpha/2} +  \int_{0}^{\eta^{k} |y|^{1/2}} ( \def\avint{\mathop{\,\rlap{-}\!\!\int}\nolimits} \avint_{B_s} |f|^q)^{1/q} ds\right)\  \left(\text{ $y=\eta^{-k} x$}\right)
\\
& \leq  C |x| \left(  \omega(\eta^{k/2}) |y|^{\alpha/2} +  \int_{0}^{\eta^{k/2} |y|^{1/2}} ( \def\avint{\mathop{\,\rlap{-}\!\!\int}\nolimits} \avint_{B_s} |f|^q)^{1/q} ds\right)\ \left(\text{using $\eta^k \leq \eta^{k/2}$ and $\omega(\eta^{k}) \leq \omega(\eta^{k/2})$}\right)
\notag
\\
& = C |x| \left(  \omega(\eta^{k/2}) |y|^{\alpha/2} +  \int_{0}^{ |x|^{1/2}} ( \def\avint{\mathop{\,\rlap{-}\!\!\int}\nolimits} \avint_{B_s} |f|^q)^{1/q} ds\right).
\notag
\end{align}
Now, let $j$ be the smallest integer such that  $|y| \leq  \eta^{j}$. Then, we have that
\begin{align}\label{b8}
  \omega(\eta^{k/2}) |y|^{\alpha/2}& \leq \omega(\eta^{k/2}) \eta^{j\alpha/2}
\\
& =\frac{1}{\ve_0}  \sum_{i=j/2}^{\frac{k+j}{2}} \eta^{ i\alpha} \omega_1(\eta^{\frac{k+j}{2} - i}) \leq \omega(\eta^{\frac{k+j}{2}})
\notag
\\
& \leq C\bigg[ \bigg(\int_{0}^{ 1} \left( \def\avint{\mathop{\,\rlap{-}\!\!\int}\nolimits} \avint_{B_s} |f|^q)^{1/q} \bigg) |x|^{\alpha/4} +  \int_{0}^{ |x|^{1/4}} ( \def\avint{\mathop{\,\rlap{-}\!\!\int}\nolimits} \avint_{B_s} |f|^q\right)^{1/q} ds \bigg] 
\notag
\\
&\leq C K_0(4|x|)\quad (\text{using $y=\eta^{-k} x$}),
\notag
\end{align}
where the last inequality in \eqref{b8} follows from a computation as in \eqref{b0}. This implies that \eqref{des} holds with $\tilde L x\doteq <\omega(\eta^k) A_0, x>$, when $|x| \leq \eta^k$. 

Now when $|x| \geq \eta^k$,   one can show  that 
\begin{equation}\label{cl1}
|u(x) | \leq C |x| \omega(2|x|) \leq C |x|  K_0(4 |x|).
\end{equation}
This follows from the fact that with $L_ix\doteq <B_i,x>$ we have for $i=0,\ldots, k-1$,
\[
||u-L_i||_{L^{\infty}(B_{\eta^i})}  \leq \delta_0 \eta^i \omega( \eta^i) 
\]
and
\[
| B_i| \leq 2 \omega(\eta^i)
\]
because \eqref{H} holds upto $k-1$. And moreover for $i=k$, we  again   have
\[
||u-L_k||_{L^{\infty}(B_{\eta^k})}  \leq \delta_0 \eta^k \omega( \eta^k).
\]
In this case, instead the following bound holds
\[
|B_k| \leq C \omega(\eta^{k-1}) \leq \frac{C \omega(\eta^k)}{\eta^\alpha}\ \text{using $\alpha-$decreasing property of $\omega$}
\]
Using such estimates, it is easy to see that   \eqref{cl1}  holds. Now note that with   \linebreak $\tilde L x\doteq < \tilde B, x>$, with $B\doteq \omega(\eta^k) A_0$,  we also  have  the following bound
\begin{equation}\label{ai}
 |\tilde B| \leq C \omega(\eta^k).
 \end{equation}
Therefore, it follows from \eqref{cl1} and the estimate \eqref{ai} above that 
  \begin{equation}
  |u(x)- \tilde L(x) | \leq C |x| K_0(4|x|)
  \end{equation}
  also holds when $|x| \geq \eta^k$, for a possibly different $C$.  Hence  the estimate in \eqref{des} follows with $\tilde L x\doteq<\tilde B, x>$  and this finishes the proof of the theorem.

\end{proof}

\subsection{Proof of Theorem \ref{main1}}
In this subsection, we assume that $u$ is a $W^{2,m}$ viscosity solution to
\begin{equation}\label{m2}
\bigg(\delta_{ij} + (p-2) \frac{u_i u_j}{|\nabla u|^2} \bigg) u_{ij} =f,
\end{equation}
where $f \in L^{m}$ for some $m>n$.  We now state and prove the counterparts of the approximation lemmas in this situation.  The analogue of Lemma \ref{app1} is as follows.

\begin{lem}\label{ap01}
Let $u$ be a  $W^{2, m}$ viscosity solution to
\begin{equation}
\bigg(\delta_{ij} + (p-2) \frac{(\delta u_i+A_i)(\delta u_j +A_j)}{ |\delta \nabla u+A|^2} \bigg) u_{ij}=f\quad \text{in $B_1$},
\end{equation}
 with $|u| \leq 1$ and $|A| \geq 1$.  Given $\tau>0$, there exists  $\delta_0=\delta_0(\tau)>0$  such that if  
\[
\delta, \left(\frac{1}{|B_{3/4}|} \int_{B_{3/4}}  |f|^m \right)^{1/m} \leq \delta_0,
\]
then $||w-u||_{L^{\infty}(B_{1/2})} \leq \tau$ for some $w \in C^{2}(\overline{B_{1/2}})$ satisfying $w(0)=0$  with universal $C^{2}$ bounds depending only on $n, p$ and independent of $|A|$. 
\end{lem}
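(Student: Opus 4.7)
The plan is to argue by contradiction, mirroring the proof of Lemma \ref{app1} almost verbatim, with the $L^q$ integrability on $f$ replaced by the stronger $L^m$ integrability. Assume there exist $\tau_0>0$ and a sequence $\{u_k, f_k\}$ of $W^{2,m}$ viscosity solutions to the equation corresponding to $\{\delta_k, A_k\}$ with $|A_k|\geq 1$, $|u_k|\leq 1$, $\delta_k\to 0$ and $\|f_k\|_{L^m(B_{3/4})}\to 0$, and such that no $u_k$ is $\tau_0$-close in $L^\infty(B_{1/2})$ to any smooth $w$ with $w(0)=0$ and universal $C^{2}$ bounds. Rewrite the equation satisfied by $u_k$ in the normalized form with $\tilde\delta_k=\delta_k/|A_k|\to 0$ and $\tilde A_k=A_k/|A_k|$, pass to a subsequence so that $\tilde A_k\to A_0$ with $|A_0|=1$.

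Since $m>n>q$, we have $f_k \to 0$ in $L^{q}(B_{3/4})$ as well, so $u_k$ belongs to the Pucci class $\mathcal{S}(\lambda,\Lambda,f_k)$ in the $W^{2,q}$ viscosity sense and the Krylov-Safonov type Hölder estimate from \cite{E} yields uniform Hölder continuity on $\overline{B_{3/4}}$. By Arzelà-Ascoli, extract a further subsequence so that $u_k\to u_0$ uniformly on $\overline{B_{3/4}}$.

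The main step, and the principal obstacle, is to verify that $u_0$ solves the limiting constant-coefficient equation
\begin{equation*}
\bigl(\delta_{ij}+(p-2)(A_0)_i(A_0)_j\bigr)(u_0)_{ij}=0\quad\text{in }B_{1/2}.
\end{equation*}
As noted in the proof of Lemma \ref{app1}, the stability result in Theorem 3.8 of \cite{CCCS} does not apply directly because of the singular gradient dependence of the operator. The remedy is to run the perturbed test function construction of Lemma \ref{app1} verbatim: given a $C^2$ test function $\phi$ strictly touching $u_0$ from above at $x_0\in B_{1/2}$, assume the limit inequality fails, choose $\varepsilon,r,\eta>0$ with an appropriate separation on $\partial B_r(x_0)$, derive the same Pucci-type upper bound
\begin{equation*}
F_k^{*}(\nabla(\phi+\phi_k),\nabla^2(\phi+\phi_k))\leq \mathcal{P}^{+}_{\lambda,\Lambda}(\nabla^2\phi_k)+C_0|\tilde A_k-A_0|+C_0\tilde\delta_k|\nabla\phi_k|+C_0\tilde\delta_k|\nabla\phi|-\varepsilon,
\end{equation*}
and solve the associated Pucci equation for $\phi_k$ with zero Dirichlet data on $\partial B_r(x_0)$. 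The critical point is that the existence of strong $W^{2,m}$ solutions and the generalized ($L^m$-based) maximum principle in Corollary 3.10 of \cite{CCCS} remain available here precisely because $m>n$, so $\|\phi_k\|_{L^\infty(B_r)}\to 0$ and $\phi+\phi_k$ becomes a valid perturbed test function that violates the viscosity subsolution property of $u_k$ for large $k$. The symmetric argument from below yields the opposite inequality.

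Once $u_0$ is identified as a bounded viscosity, hence classical, solution of the constant-coefficient uniformly elliptic equation above, interior Schauder/linear regularity gives universal $C^2(\overline{B_{1/2}})$ bounds depending only on $n$ and $p$ (and not on $A_0$, since the ellipticity constants are controlled by $\lambda,\Lambda$ from \eqref{l}). Setting $w(x)=u_0(x)-u_0(0)$ produces a function with $w(0)=0$, universal $C^{2}$ bounds, and $\|w-u_k\|_{L^\infty(B_{1/2})}\to 0$ (after absorbing the uniformly small shift $u_k(0)-u_0(0)$, which tends to $0$), contradicting the standing assumption for $k$ large and completing the proof.
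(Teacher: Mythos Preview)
Your proposal is correct and follows exactly the approach the paper intends: the paper's own proof of Lemma~\ref{ap01} reads in full ``The proof is identical to that of Lemma~\ref{app1} and so we omit the details,'' and you have reproduced precisely that argument, correctly observing that $m>n>q$ so the Krylov--Safonov H\"older estimate from \cite{E} and the existence/generalized maximum principle from \cite{CCCS} are available. Your closing remark about the shift $u_0(0)$ is a minor artifact of the missing hypothesis $u(0)=0$ in the stated lemma (it is present in Lemma~\ref{app1} and is in force whenever the lemma is applied, cf.\ the proof of Lemma~\ref{ap02}); with that hypothesis one has $u_0(0)=\lim u_k(0)=0$, the shift vanishes, and the argument closes exactly as in Lemma~\ref{app1}.
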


\begin{proof}
The proof is identical to that of Lemma \ref{app1} and so we omit the details.

\end{proof}

We now state the counterpart of  Lemma \ref{ap2}.

\begin{lem}\label{ap02}
Let $u$ be  a viscosity  solution to
\[
\bigg(\delta_{ij}+ (p-2) \frac{u_i u_j}{|\nabla u|^2} \bigg) u_{ij}=f
\]
in $B_1$ with  $u(0)=0$. Then there exists a universal $\delta_0>0$, such that if for some  $A \in \R^n$ satisfying  $M \geq|A| \geq 2$  we have 
\[
||u- <A,x>||_{L^{\infty}(B_1)} \leq \delta_0,
\]
and also 
\[
||f||_{L^m(B_1)}  \leq \delta_0^2,
\] 
then there exists an affine function  $ L_0$ such that
\begin{equation}\label{difr}
\begin{cases}
1 \leq|\nabla L_0|\leq M+1
\\
|u(x)-  L_0(x) | \leq C|x|^{1+\alpha_0}
\end{cases}
\end{equation}
where $\alpha_0 < \text{min}( \alpha, 1-n/m)$.  Moreover, $\delta_0$ can be chosen independent of $M$. 
\end{lem}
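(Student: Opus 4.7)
The plan is to imitate the iteration in Lemma \ref{ap2}, but to replace the logarithmic-type modulus coming from the modified Riesz potential by a pure H\"older modulus $\omega_k = \delta_0 r^{k \alpha_0}$ at dyadic scales $r^k$, taking advantage of the stronger $L^m$ integrability. Concretely, I would prove by induction on $k \geq 0$ that there exist vectors $A_0 = A, A_1, A_2, \ldots \in \R^n$ satisfying
\begin{equation*}
\|u - \langle A_k, \cdot \rangle\|_{L^\infty(B_{r^k})} \leq \delta_0 r^{k(1 + \alpha_0)}, \qquad |A_{k+1} - A_k| \leq C \delta_0 r^{k \alpha_0},
\end{equation*}
for some universal $r \in (0,1)$ to be fixed. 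The base case $k=0$ is exactly the assumption $\|u - \langle A, \cdot \rangle\|_{L^\infty(B_1)} \leq \delta_0$.

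For the inductive step I would rescale
\begin{equation*}
\tilde u(y) \doteq \frac{u(r^k y) - \langle A_k, r^k y \rangle}{\delta_0 r^{k(1+\alpha_0)}},
\end{equation*}
so that $|\tilde u|\leq 1$, $\tilde u(0)=0$, and $\tilde u$ satisfies
\begin{equation*}
\bigg(\delta_{ij} + (p-2) \frac{(\tilde \delta_k \tilde u_i + (A_k)_i)(\tilde \delta_k \tilde u_j + (A_k)_j)}{|\tilde \delta_k \nabla \tilde u + A_k|^2}\bigg) \tilde u_{ij} = \tilde f_k \quad \text{in } B_1,
\end{equation*}
with $\tilde \delta_k = \delta_0 r^{k \alpha_0}$ and $\tilde f_k(y) = \delta_0^{-1} r^{k(1-\alpha_0)} f(r^k y)$. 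A direct change of variables and the hypothesis $\|f\|_{L^m(B_1)} \leq \delta_0^2$ give
\begin{equation*}
\|\tilde f_k\|_{L^m(B_{3/4})} \leq \delta_0^{-1} r^{k(1-\alpha_0-n/m)} \|f\|_{L^m(B_{3r^k/4})} \leq \delta_0,
\end{equation*}
where the positive exponent $1 - \alpha_0 - n/m > 0$ is secured by the standing hypothesis $\alpha_0 < 1 - n/m$. Since $\tilde \delta_k \leq \delta_0$ and $|A_k| \geq 1$ (verified below), Lemma \ref{ap01} applies to $\tilde u$. Fixing $r$ universal so that $C r^{1-\alpha_0} \leq 1/2$, where $C$ is the universal $C^2$ bound of the approximant, and $\tau = r^{1+\alpha_0}/2$ determines $\delta_0$, Lemma \ref{ap01} produces an affine $L$ with $\|\tilde u - \langle L, \cdot\rangle\|_{L^\infty(B_r)} \leq r^{1+\alpha_0}$. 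Setting $A_{k+1} \doteq A_k + \delta_0 r^{k \alpha_0} L$ and scaling back to $u$ then closes the induction.

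Non-degeneracy is controlled by the telescoping estimate $|A_k| \geq |A| - C \delta_0 \sum_{j<k} r^{j \alpha_0} \geq 2 - C \delta_0/(1 - r^{\alpha_0}) \geq 1$ provided $\delta_0$ is chosen sufficiently small depending only on the universal $r$; the same argument, using $|A| \leq M$, yields $|A_k|\leq M+1$, and crucially both bounds hold uniformly in $M$. Hence $(A_k)$ converges to some $A_\infty$ with $1 \leq |A_\infty| \leq M + 1$, and for any $x \in B_1$ with $r^{k+1} < |x| \leq r^k$ a standard triangle-inequality argument gives
\begin{equation*}
|u(x) - \langle A_\infty, x \rangle| \leq \delta_0 r^{k(1+\alpha_0)} + C |x| \sum_{j \geq k} \delta_0 r^{j \alpha_0} \leq C |x|^{1+\alpha_0},
\end{equation*}
so $L_0(x) \doteq \langle A_\infty, x\rangle$ is the required affine function. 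The main technical point, which accounts for the precise range of allowed $\alpha_0$, is the scaling estimate for $\tilde f_k$: without the strict integrability gap $\alpha_0 < 1 - n/m$, the rescaled source would fail to remain universally small under the iteration and the induction could not be closed; the additional requirement $\alpha_0 < \alpha$ is inherited from the compactness step through the universal parameter fixed in \eqref{universal}.
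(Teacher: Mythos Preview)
Your proposal is correct and follows essentially the same route as the paper: an inductive application of Lemma~\ref{ap01} at scales $r^k$, with the rescaled source controlled via the change-of-variables estimate $\|\tilde f_k\|_{L^m}\le \delta_0\, r^{k(1-n/m-\alpha_0)}\le \delta_0$, which closes precisely because $\alpha_0<1-n/m$. You spell out the non-degeneracy telescoping, the upper bound $|A_k|\le M+1$, and the final convergence of $(A_k)$ to $A_\infty$ more explicitly than the paper (which just says the conclusion follows ``in a standard way''), but the argument is identical; one minor remark is that within this lemma the constraint $\alpha_0<\alpha$ is not actually used (only $\alpha_0<1$ and $\alpha_0<1-n/m$ are), the bound $\alpha_0<\alpha$ being relevant rather for Lemma~\ref{rt1} in the overall scheme.
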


\begin{proof}
As in the proof of Lemma \ref{ap2}, we show that  for every $k \in \mathbb{N}$, there exists affine functions $\tilde L_kx = <A_k ,x>$ such that
\begin{equation}\label{itt}
\begin{cases}
||u- \tilde L_k x||_{L^{\infty}(B_{r^k})}\leq \delta_0 r^{k(1+\alpha_0)},
\\
|A_k - A_{k+1}| \leq C \delta_0 r^{k\alpha_0},
\end{cases}
\end{equation}
for some $r<1$ universal independent of $\delta_0$. The conclusion of the lemma then follows from \eqref{itt} in a standard way. 
We first observe that \eqref{itt} holds for $k=0$ with $A_0=A$. Moreover the non-degeneracy condition as in  \eqref{ndeg} is easily verified in this situation provided $\delta_0$ is small enough. Now assume \eqref{itt} holds upto some $k$.  We then define 
\[
v= \frac{u-\tilde L_k (r^k x)}{\delta_0r^{k(1+\alpha_0)}}.
\]
Then $v$ solves in $B_1$
\[
\bigg(\delta_{ij} + (p-2) \frac{ (\delta_0 r^{k \alpha_0} v_i + (A_k)_i) (\delta_0 r^{k \alpha_0} v_j + (A_k)_j)}{| \delta_0 r^{k\alpha_0} \nabla v + A_k|^2} \bigg) v_{ij}= f_k,
\]
where $f_k$ is defined as
\[
f_k(x)= r^{k(1-\alpha_0)} \frac{f(r^k x)}{\delta_0}.
\]
Now by change of variable it is seen that
\[
||f_k||_{L^m(B_1)} = r^{k(1-n/m) - \alpha_0} \frac{1}{\delta_0} ||f||_{L^m(B_{r^k})} \leq \delta_0.
\]
Note that over here, we crucially used the hypothesis of the lemma i.e,
\[
||f||_{L^m(B_1)} \leq \delta_0^2,
\]
and the fact that  $\alpha_0 < 1-n/m$.  Therefore, $v$ satisfies the hypothesis of Lemma \ref{ap01} and at this point  we can repeat the arguments in the proof of Lemma \ref{ap2} to conclude that there exists $ \tilde L_{k+1}(x) = \tilde L_k (x) + \delta_0 r^{k(1+\alpha_0)}L( \frac{x}{r^k})$, where $L$ has universal bounds such that \eqref{itt} holds for $k+1$. This verifies the induction step and the conclusion of the lemma thus follows.

\end{proof}

We also have the following lemma which is the analogue of Lemma \ref{rt}.

\begin{lem}\label{rt1}
Let $u$ be a solution of
\begin{equation}\label{j10}
\bigg(\delta_{ij}+ (p-2) \frac{u_i u_j}{|\nabla u|^2} \bigg) u_{ij}=f \quad \text{in $B_1$},
\end{equation}
 with $|u| \leq 3$ and $u(0)=0$.  There exists  a universal $\ve_0>0$ such that if 
\begin{equation}\label{sml5}
||f||_{L^m(B_1)} \leq \ve_0,
\end{equation}
then there exists an affine function $L$  with universal bounds  and a universal $\eta \in (0,1)$  such that
\[
||u- L||_{L^{\infty}(B_{\eta})} \leq \delta_0 \eta^{1+\alpha_0},
\]
where $\delta_0$ is as in Lemma \ref{ap02} above. Without loss of generality we may take $\ve_0 < \delta_0^2$. 

\end{lem}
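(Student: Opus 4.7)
The plan is to mimic the proof of Lemma \ref{rt} step by step, replacing the modified Riesz potential smallness hypothesis by the $L^{m}$ smallness hypothesis. The only structural change is that the exponent $\alpha$ in the flatness improvement is to be replaced by $\alpha_0$, which is permissible because $\alpha_0<\alpha<\beta$, where $\beta$ is the universal $C^{1,\beta}$ exponent for $p$-harmonic functions coming from \cite{Db, Le, To}.

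First, I would establish the following compactness statement: given $\kappa>0$, there exists $\varepsilon_{0}>0$ such that whenever $u$ solves \eqref{j10} in $B_{1}$ with $|u|\leq 3$, $u(0)=0$ and $\|f\|_{L^{m}(B_{1})}\leq \varepsilon_{0}$, one can find a $p$-harmonic function $w$ in $B_{1/2}$ with
\[
\|w-u\|_{L^{\infty}(B_{1/2})}\leq \kappa.
\]
This is proved by contradiction exactly as in Lemma \ref{rt}: extract a sequence $\{u_{k},f_{k}\}$ with $\|f_{k}\|_{L^{m}}\to 0$, use Krylov--Safonov--Escauriaza type H\"older estimates (available since $m>n>q$, so $f_{k}\in L^{q}$ and $\|f_{k}\|_{L^{q}}\to 0$ by H\"older's inequality) to pass to a uniform limit $u_{0}$, and then verify via the perturbed test function construction from the proof of Lemma \ref{rt} that $u_{0}$ solves $\Delta_{p}^{N}u_{0}=0$ in the viscosity sense on $\{\nabla\phi\neq 0\}$. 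By \cite{JLM}, this identifies $u_{0}$ as $p$-harmonic and produces the desired contradiction. The stability argument uses the existence of strong $W^{2,q}$ solutions of the Pucci-type correctors coming from Corollary 3.10 in \cite{CCCS}, exactly as before; the $L^{m}$ hypothesis feeds in only through the fact that $f_{k}\to 0$ in $L^{q}$ along the sequence.

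Once the approximation is in hand, I would invoke the $C^{1,\beta}$ regularity of $p$-harmonic functions to obtain an affine function $L$, with universal bounds depending on $\|w\|_{L^{\infty}}\leq 3+\kappa$, such that
\[
|w(x)-L(x)|\leq C|x|^{1+\beta}\quad\text{for }x\in B_{1/2}.
\]
Choose a universal $\eta\in(0,1)$ satisfying
\[
C\eta^{1+\beta}=\frac{\delta_{0}}{2}\eta^{1+\alpha_{0}},
\]
which is possible precisely because $\alpha_{0}<\beta$. Then set $\kappa\doteq \frac{\delta_{0}}{2}\eta^{1+\alpha_{0}}$; this choice determines the value of $\varepsilon_{0}$ from the compactness step, and we may further shrink $\varepsilon_{0}$ to ensure $\varepsilon_{0}<\delta_{0}^{2}$. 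A triangle inequality then yields
\[
\|u-L\|_{L^{\infty}(B_{\eta})}\leq \|u-w\|_{L^{\infty}(B_{\eta})}+\|w-L\|_{L^{\infty}(B_{\eta})}\leq \kappa+C\eta^{1+\beta}\leq \delta_{0}\eta^{1+\alpha_{0}},
\]
which is the desired bound.

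The main obstacle, as in Lemma \ref{rt}, is the stability/perturbed test function step, because the normalized $p$-Laplacian operator is singular in the gradient variable and therefore the stability result of Theorem 3.8 of \cite{CCCS} does not apply directly. This is already handled in Lemma \ref{rt} by the construction of $\phi_{k}$ solving a Pucci equation with a first order perturbation dictated by the size of $|\nabla\phi|$ on the set where the test function is non-degenerate, and then by applying the generalized maximum principle to conclude $\|\phi_{k}\|_{L^{\infty}}\to 0$. In the present setting the same construction goes through verbatim since $\|f_{k}\|_{L^{q}}\to 0$ is all that is needed there. Hence the whole argument reduces to repeating the proof of Lemma \ref{rt} with $\alpha$ replaced by $\alpha_{0}$ and with the hypothesis on $f$ upgraded to $\|f\|_{L^{m}}\leq\varepsilon_{0}$, which directly supplies the weaker $L^{q}$ smallness needed at each step.
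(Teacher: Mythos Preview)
Your proposal is correct and follows exactly the approach the paper takes: the authors simply state that the proof is identical to that of Lemma \ref{rt} and omit the details. The only changes you make---replacing $\alpha$ by $\alpha_0$ (still $<\beta$) and feeding in the $L^{m}$ smallness via H\"older to get $L^{q}$ smallness for the Krylov--Safonov and perturbed test function steps---are precisely the cosmetic adjustments needed.
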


\begin{proof}
The proof is again identical to that of Lemma \ref{rt} and thus we skip the details. 

\end{proof}

With Lemmas \ref{ap01}--\ref{rt1} in hand, we now proceed with the proof of Theorem \ref{main1}. 

\begin{proof}[ Proof of Theorem \ref{main1}]
It suffices to show that at $0$, there exists an affine function $\tilde L$  with universal bounds such that 
\begin{equation}\label{des1}
|u(x)- \tilde L(x)| \leq C|x|^{1+\alpha_0}.
\end{equation}
We also assume that $u(0)=0$.  Now with  $\eta, \ve_0$ as in Lemma \ref{rt1} and  $\delta_0$ as in  Lemma \ref{ap02},   assume the following hypothesis for a given $i \in \mathbb{N}$,
\begin{equation}\label{h1}[H1]
\begin{cases}
\text{There exists affine function $L_i(x)\doteq <B_i, x>$ such that}\  ||u-L_i||_{L^{\infty}(B_{\eta^i})}  \leq \frac{\delta_0}{\ve_0} \eta^{i (1+\alpha_0)}
\\
\text{and}\ |B_i|  \leq  \frac{2}{\ve_0} \eta^{i\alpha_0}.
\end{cases}
\end{equation}
By multiplying $u$ with a suitable constant, we may assume that the hypothesis  holds for $i=0$ with $L_0=0$.  We can also assume that 
\begin{equation}\label{nor}
||f||_{L^m(B_1)} \leq 1.
\end{equation}

Let $k$ be the smallest integer such that \eqref{h1} fails. Then as in the proof of Theorem \ref{main}, there are two possibilities.

\medskip

\emph{Case 1:} Suppose $k=\infty$. Then in this case, \eqref{des1} is seen to hold with $\tilde L=0$. 

\medskip

\emph{Case 2:} Suppose instead that $k<\infty$. Then we have that the hypothesis is satisfied upto $k-1$.  As before, we let
\[
v(x)= \ve_0 \frac{u(\eta^{k-1}x)}{\eta^{(k-1)(1+\alpha_0)}},
\]
which solves in $B_1$
\[
\bigg( \delta_{ij}+ (p-2) \frac{v_i v_j}{|\nabla v|^2} \bigg) v_{ij} = f_k,
\]
where 
\[
f_k(x) = \ve_0 \eta^{(k-1)(1-\alpha_0)} f(\eta^{k-1} x).
\]
Then  by change of variable and \eqref{nor}, it  is again seen that  $||f_k||_{L^m} \leq \ve_0 $. Moreover, from \eqref{h1} and triangle inequality it follows that $|v| \leq 2 + \delta_0 \leq 3$.  Thus the hypothesis of Lemma \ref{rt1} is satisfied and consequently there exists $L x= <\tilde A, x>$ affine such that
\[
||v- L||_{L^{\infty}(B_\eta)} \leq \delta_0 \eta^{1+\alpha_0}.
\]
By scaling back to $u$, we obtain with $L_k x\doteq  B_k x$, with $B_k= \frac{\eta^{(k-1)\alpha_0}}{\ve_0} \tilde A $, that the following holds,
\[
||u-L_k||_{L^{\infty}(B_{\eta^k})} \leq \frac{\delta_0}{\ve_0} \eta^{k(1+\alpha_0)}.
\]
However since Statement $[H1]$ fails,  we  must necessarily have
\[
|B_k| \geq \frac{2}{\ve_0} \eta^{k\alpha_0}.
\]
If we now let
\[
\tilde v(x)= \ve_0 \frac{u(\eta^k x)}{\eta^{k(1+\alpha_0)}},
\]
then, as in the proof of Theorem \ref{main}, it can be easily checked  that $\tilde v$ solves an equation of the type \eqref{m} such that the hypothesis of Lemma \ref{ap02} is verified. Hence there exists an affine function $L_0 x= <A_0, x>$,  with universal bounds depending on $\eta$, such that
\[
|\tilde v- L_0 x| \leq C|x|^{1+\alpha_0}.
\]
By scaling back to $u$ we obtain  that, with $\tilde L(x)\doteq \frac{\eta^{k\alpha_0}}{\ve_0}<  A_0, x>$, the following estimate holds for $|x| \leq \eta^k$, 
\begin{equation}\label{g10}
|u(x)- \tilde L(x)| \leq C|x|^{1+\alpha_0}.
\end{equation}
The rest of the argument is again the same  as in  the proof of Theorem \ref{main}, which allows us to conclude that the estimate \eqref{g10} holds also when $|x| \geq \eta^k$. This finishes the proof of the theorem.

\end{proof}

  \end{document}